\documentclass[preprint]{imsart}
\RequirePackage[OT1]{fontenc}
\RequirePackage{amsthm,amsmath}
\RequirePackage[numbers]{natbib}
\RequirePackage[colorlinks,citecolor=blue,urlcolor=blue]{hyperref}

\pubyear{\today}

\startlocaldefs
\usepackage[tikz]{bclogo}
\usepackage{algorithm2e}

\usepackage{amssymb, amsmath, natbib, amsthm, graphicx, euscript, mathrsfs, enumerate, empheq}

\newcommand{\eqd}{\stackrel{Law}{=}}

\newcommand{\T}{\mathcal{T}}
\newcommand{\E}{{\mathbb E}}

\renewcommand{\i}{\mathrm{i}}
\newcommand{\hs}{\hat{s}}
\newcommand{\too}{\rightsquigarrow}
\newcommand{\C}{\mathcal{C}}

\def\R{I\!\!R}
\def\N{I\!\!N}
\newcommand{\eps}{\varepsilon}

\newtheorem{thm}{Theorem}[section]
\newtheorem{lem}[thm]{Lemma}
\newtheorem{cor}[thm]{Corollary}
\newtheorem{prop}[thm]{Proposition}
\newtheorem{rem}[thm]{Remark}

\renewcommand{\P}{{\mathbb P}}
\renewcommand{\L}{\mathscr{L}}

\newcommand{\B}{\mathcal{B}}
\newcommand{\W}{\mathcal{W}}

\newcommand{\bd}{\bar{\delta}}

\renewcommand{\kappa}{\varkappa}
\def\sec#1{\underline{\textbf{#1}}}
\newcommand{\p}{\Upsilon}
\renewcommand{\k}{k}
\newcommand{\Z}{\mathscr{Z}}
\renewcommand{\wp}{\widetilde{\p}}

\newcommand{\bz}{\breve{\zeta}}
\newcommand{\bp}{\breve{\p}}
\newcommand{\btau}{\breve{\tau}}
\newcommand{\cc}{\breve{c}}
\newcommand{\DD}{\mathcal{D}}
\newcommand{\DDD}{\mathscr{D}}
\newcommand{\I}{{\mathbb I}}
\newcommand{\tzeta}{\tilde{\zeta}}
\newcommand{\F}{\mathcal{F}}

\newlength\fboxseph
\newlength\fboxsepv

\setlength\fboxsepv{0.25cm}
\setlength\fboxseph{1cm}

\makeatletter

\def\longboxed#1{\leavevmode\setbox\@tempboxa\hbox{\color@begingroup%
\kern\fboxseph{\m@th$\displaystyle #1 $}\kern\fboxseph%
\color@endgroup }\my@frameb@x\relax}

\def\my@frameb@x#1{%
\@tempdima\fboxrule \advance\@tempdima \fboxsepv \advance\@tempdima \dp\@tempboxa\hbox {%
\lower \@tempdima \hbox {%
\vbox {\hrule\@height\fboxrule \hbox{\vrule\@width\fboxrule #1 \vbox{%
\vskip\fboxsepv \box\@tempboxa \vskip\fboxsepv}#1 \vrule\@width\fboxrule }%
\hrule \@height \fboxrule }}}}

\makeatother

\begin{document}
\begin{frontmatter}
\title{Convergence rates of maximal deviation distribution for projection estimates of L\'evy densities}
\runtitle{Convergence rates for L{\'e}vy densities}

\begin{aug}
\author{\fnms{Valentin} \snm{Konakov}\ead[label=e1]{VKonakov@hse.ru}}
\and
\author{\fnms{Vladimir} \snm{Panov}\ead[label=e2]{VPanov@hse.ru}}

\address{Laboratory of Stochastic Analysis and its Applications \\National Research University Higher School of Economics\\ 
Shabolovka  31, building G,  115162 Moscow, Russia\\
\printead{e1,e2}}

\runauthor{V.Konakov and V.Panov}
\affiliation{Higher School of Economics}
\end{aug}

\begin{abstract}
In this paper, we consider projection estimates for L{\'e}vy densities in high-frequency setup.  We give a unified treatment for different sets of basis functions and focus on the asymptotic properties of the maximal deviation distribution for these estimates. Our results are  based on the  idea to reformulate the problems in terms of Gaussian processes of some special type and to further analyze these Gaussian processes. 
In particular, we construct a sequence of excursion sets, which guarantees the convergence of the deviation distribution to the Gumbel distribution.   We show that the rates of convergence presented in previous articles on this topic are logarithmic and construct the sequences of accompanying laws, which approximate the deviation distribution with polynomial rate.
\end{abstract}

\begin{keyword}[class=MSC]
\kwd[Primary ]{60G51}
\kwd{62M99}
\kwd[; secondary ]{62G05}
\end{keyword}

\begin{keyword}
\kwd{L\'evy density}
\kwd{maximal deviation}
\kwd{nonparametric inference}
\kwd{projection estimates}
\end{keyword}

\tableofcontents
\end{frontmatter}

\section{Introduction}
\label{intro}
Consider a one-dimensional L{\'e}vy process \(X_{t}\) with L{\'e}vy triplet \(\left( \mu, \sigma, \nu \right)\).  Assume that measure \(\nu\) has density \(s(\cdot)\), known as L{\'e}vy density,  that is,
\[
	\nu(B) = \int_{B} s(u) du, \quad \forall B \in \B(\R),
\] 
Assuming that some discrete equidistant observations \(X_{0}, X_{\Delta}, ..., X_{n \Delta}\) of the process \(X_{t}\) are available, it is natural to ask how  one can  statistically infer on the L{\'e}vy density \(s(\cdot)\), or more generally speaking, on the L{\'e}vy measure \(\nu\).  The answer to this question highly depends on the type of  the available data.  The first situation, known as \textit{high-frequency setup}, is based on the assumption that the time distance between the observations \(\Delta=\Delta_{n}\)  depends on \(n\) and tends to \(0\) as \(n \to \infty\). Moreover, very often (and in this paper) it is also assumed that the time horizon \(T=n \Delta_{n}\to \infty\) as \(n \to \infty\). Non-parametric inference for this case has been  considered by Comte and Genon-Catalot (\citeyear{CGC}),  Figueroa-L{\'o}pez (\citeyear{Fig3}), Figueroa-L{\'o}pez and Houdr{\'e} (\citeyear{FLH}). The second situation, the so-called \textit{low-frequency setup}, in which \(\Delta\) is fixed, has been extensively studied by 
Nickl and Reiss (\citeyear{NicklReiss}), Gugushvili (\citeyear{Gugu}), Belomestny (\citeyear{Belomest2011}), Comte and Genon-Catalot (\citeyear{CGC2}), Chen, Dalaigle and Hall (\citeyear{CDH}), Neumann and Reiss (\citeyear{NeuReiss}), van Es, Gugushvili and Spreij (\citeyear{EGS}). The essential idea in almost all papers mentioned above is to express the L{\'e}vy measure in terms of the characteristic function of \(X_t\) and then replace this characteristic  function by its natural nonparametric estimator. For instance, applying the L\'evy-Khintchine formula for the characteristic function 
\( \phi_{X_{t}}(u) = \E \left[\exp\left\{ 
		\i u X_t
\right\}\right], \)
\[
\phi_{X_{t}}(u) 
=\exp \left\{ 
	t \left( 
  		  \i \mu u-\frac{1}{2}\sigma^2 u^2
  		  +
   		 \int_{\mathbb{R}\setminus \{0\}}\left( e^{\i u x}-1 - \i u x 		\cdot\mathbf{1}_{\{ |x|\leq 1 \}} \right)\nu(dx)
 \right)
    \right\}\]
 to the compound Poisson process without a drift (that is,  \(\mu=\sigma=0,\) and  L{\'e}vy messure is finite, \(\int_{\R}\nu(du) <\infty\)), we get that the Fourier transform of the function \(x \cdot s(x)\), 
is equal to 
\[
	\F_{x \cdot s(x)}(u) = \int_{\R }e^{\i u x} x s(x) dx = -i  \frac{\phi'_{X_{\Delta}} (u)}
	{
		\Delta \phi_{X_{\Delta}}(u)
	}.
\]
Substituting into the last formula natural estimator for the characteristic function 
\(\hat{\phi}_{X_{\Delta}}(u) := n^{-1} \sum_{k=1}^{n} \exp\left\{ 
	\i u \left(
		X_{k \Delta} - X_{(k-1) \Delta}
	\right)
\right\}
\)
instead of \(\phi_{X_{\Delta}}(u)\), and avoiding the situation when \(\hat{\phi}_{X_{\Delta}}(u)=0\) by multiplying the fraction by the indicator function \(\I\{|\hat{\phi}_{X_{\Delta}}(u)|>A\}\) with some \(A\), we finally get that the function
\[
	\widehat{\F}_{x \cdot s(x)}(u)  =  \frac{\sum_{k=1}^{n} 
	\left(   
		X_{k \Delta} - X_{(k-1) \Delta}
	\right) e^{
	\i u \left(
		X_{k \Delta} - X_{(k-1) \Delta}
	\right)
}
	}
	{
		\Delta \cdot \sum_{k=1}^{n} e^{
				\i u \left(
					X_{k \Delta} - X_{(k-1) \Delta}
				\right)
		}
	} \I\{|\hat{\phi}_{X_{\Delta}}(u)|>A\}
\]
is a reasonable estimator for \(\F_{x \cdot s(x)}(u) \),   and can be further used for statistical inference on \(s(u)\). Similar ideas are widely used in papers on non-parametric inference, but we focus on some other approaches in the current research.

It is a worth mentioning that in most papers on this topic, the quality of proposed estimator for \(s(\cdot)\) is measured in terms of quadratic risk. More precisely, for a fixed estimate \(\hat{s}_{n}^{\circ} (x)\), a collection of L{\'e}vy processes \(\T\) and    a window \(D = [a,b] \subset \R / \{0\}\), it is common to prove two statements, which present  upper and lower bounds for the difference between \(\hat{s}_{n}^{\circ} (x)\) and the true density function \(s(x)\). These two statements are usually formulated as follows:
\begin{eqnarray*}
 	\sup_{\T}
 	\E \left(
 	\hat{s}_{n}^{\circ} (x) - s(x)
	\right)^{2}
	&\lesssim& f(n), \qquad  \forall x \in D,\\
	\inf_{\left\{\hat{s}_{n} (x) \right\}} 	\sup_{\T}
 	\E \left(
 	\hat{s}_{n}(x) - s(x)
	\right)^{2}
	&\gtrsim& g(n), \qquad \forall x \in D,
\end{eqnarray*}
where by \(\left\{ \hat{s}_{n}(x) \right\}\) we denote the set of all estimates of the L{\'e}vy density \(s(x)\), and \(f(n), g(n)\) are two functions  tending to 0 as \(n \to \infty\). If \(f(n) \approx g(n)\),  it is usually claimed that the estimate  \(\hat{s}_{n}^{\circ} (x)\) posseses the optimality property.

In this research, we focus on another aim -   we analyze the maximal deviation distribution of  the projection estimator \(\hs_{n}(x)\) defined below by \eqref{hsdef}. 
More precisely,  we are interested in the asymptotic properties of the distribution function of 
\begin{eqnarray}
\label{DDD}
	\DDD_{n}:=		\sup_{x \in D} \left(	
			\frac{
				\left|
					\hs_{n} (x)  - s (x)  
				\right|
			}
			{	
				\sqrt{s(x)}
			}
		\right).
\end{eqnarray}
 To the best of our knowledge, the unique research  in this direction is provided by  Figueroa-L{\'o}pez (\citeyear{Fig3}), who considered the maximal deviation distribution  for projection estimates to the space spanned by Legendre polynomials of orders 0 and 1. We emphasize the main differences between our paper and the paper by  Figueroa-L{\'o}pez (\citeyear{Fig3}) later  in Section~\ref{disc}. For the moment, let us only mention that our setup covers more general classes of estimates - in particular, we provide the proof for Legendre polynomials of any order, as well as for trigonometric basis and wavelets. 

One of the main sources of our inspiration is the paper by Konakov and Piterbarg (\citeyear{KP}), where the asymptotics of the maximal deviation distribution is proven for the kernel estimates of regression functions. Konakov and Piterbarg (\citeyear{KP}) showed that the convergence to asymptotic distribution given in Bickel and Rosenblatt (\citeyear{Bickel}) is very slow (of logarithmic order) and this rate cannot be improved. Moreover, in that paper, it is obtained a sequence on accompanying laws with power rate of convergence.  Nevertheless, the regression problem completely differs from the estimation of L{\'e}vy density, and therefore we are not able to apply even the techniques from that research. 

The contribution of this paper is twofold. First, we derive the asymptotic behaviour of the maximal deviation distribution for a broad class of projection estimates of the L{\'e}vy density. This result can be further applied for  constructing confidence intervals and statistical tests.  Second,  we show that the rates of convergence given in Figueroa-L{\'o}pez (\citeyear{Fig3}) are of logarithmic order and provide the sequences of accompanying laws with power rates. 

The paper is organized as follows. In the next section, we explain our setup and our assumptions on the set of basis functions. Section~\ref{mainres} contains a collection of our results. Later on, in Sections~\ref{statsec} - \ref{wav}  we prove and discuss these results separately for different choices of basis functions - trigonometric functions,  Legendre polynomials and wavelets. Some further discussions of our contribution to this topic can be found in Section~\ref{disc}. Additional proofs are given in the Appendix.

\section{Set-up}
\label{basis}
\sec{Collections of basis functions.} In this paper, we follow the set-up from \cite{Fig3}, and study the estimation of the L{\'e}vy density \(s(x)\) over a window   \(D\), based on discrete observations of the process on an interval \([0, T]\).  We consider  a family of finite linear combinations of functions from orthonormal collection  \(\left\{ \varphi_{r}(x): D \to \R, \; r=1..d\right\}\):
\begin{eqnarray}
\label{sieves}
\L= 
\Biggl\{
	\sum_{r=1}^{d} \beta_{r} \varphi_{r}(x), \qquad \vec{\beta}= \left( \beta_{1}, ... , \beta_{d} \right) \in \R^{d}
\Biggr\},
\end{eqnarray}
and later project the L{\'e}vy density to the space \(\L\) in  \(L^{2}\) - metric. 
In this article, we do not restrict our attention to a particular class of collections \(\{\varphi_{r}(x)\}\), but assume that for any \(m \in \N\) there exists a set of normalized bounded functions \(\left\{ \psi_{j}^m: D \to \R \right\}_{j=0}^{J}\)  supported on \([a, a+\delta)\), where \(\delta = (b-a)/m\), such that 
\begin{eqnarray*}
	\label{phir}
	\Biggl\{ \varphi_{r}(x) , \; r=1..d \Biggr\} &=&
	\Biggl\{  \psi_{j}^m  \left(  x -  \delta (p-1)  \right) I\left\{ x \in I_{p} \right\}, 
  j=0..J,\; p=1..m \Biggr\}, \\
  && 
	\nonumber \hspace{1.5cm}
	\qquad \mbox{where} \qquad I_{p}:= [ a+\delta(p-1), 
	a+\delta p ).
\end{eqnarray*}
 In what follows, it is important how the basis functions \(\psi_{j}^{m}(x)\) depend on \(m\), or in other words, how these functions depend on \(\delta=(b-a)/m\).  Below we give an intuition about the dependence. Note that in most examples,  basis on \([a, a+\delta)\) is constructed from a basis \(\{\widetilde{\psi}_j (x)\}_{j=0..J}\) on some ``standard'' interval  \([\tilde{a}, \tilde{b}]\) by changing the variables: 
\begin{eqnarray}
\label{psij}
\psi_j^m (x) = \sqrt{\frac{\tilde{b}-\tilde{a}}{\delta}} \cdot
\widetilde{\psi}_j \left(
	\frac{
		(\tilde{b} - \tilde{a}) (x-a)
	}{
		\delta 
	}+  \tilde{a}
\right),
\end{eqnarray}
and therefore \(\psi_j^m (x) = O(\sqrt{m})\) as \(m \to \infty\). Some typical examples are listed below.
\begin{enumerate}[(i)]
	\item Trigonometric basis on \([a,a+\delta)\)
	\begin{eqnarray*}
	\Bigl\{ 
		\psi_{j}^{m}(x), \; j=0..J
	\Bigr\} 
	&=&
	\Bigl\{  
	\chi_{0}(x) = \frac{1}{\sqrt{\delta}}, \; \chi_{j}(x) = \sqrt{\frac{2}{\delta}} \cos\left( 
		2 j \pi (x-a) / \delta
	\right),\\
	&& 
	\tilde\chi_{j}(x) = \sqrt{\frac{2}{\delta}}\sin\left( 
		2 j \pi (x-a) / \delta
	\right),
	\; j=1 .. (J/2)
	\Bigr\}.
	\end{eqnarray*}
with even \(J\).	In this case, it is natural to define  the ``standard'' interval as \([\tilde{a}, \tilde{b}] = [0, 2\pi]\), and basis on this interval as 
\[ \Bigl\{\widetilde{\psi}_j (x)\Bigr\} =
	\Bigl\{\
		\frac{1}{\sqrt{2 \pi}} , \quad \sqrt{2} \cos(jx), \quad \sqrt{2} \sin(jx)
	\Bigr\}.
\]
	\item Legendre polynomials, that is 
	\[
		\psi_{j}^{m} (x) = \sqrt{\frac{2 j +1 } {\delta}} P_{j}\Bigl( 
		\left(
			(x - a - \delta) + (x - a  ) 
		\right)/ \delta
		\Bigr),
	\]
	where 
	\[
		P_{j}(x)= \frac{1}{j! 2^{j}}\left[ \left( x^{2}-1\right) ^{j}\right] ^{(j)},
		\qquad j=0..J	
	\] are  Legendre polynomials on \([-1,1]\).  The set of orthonormal polynomials on \([\tilde{a}, \tilde{b}]=[-1,1]\)
\[ \Bigl\{\widetilde{\psi}_j (x)\Bigr\}
=
	\Bigl\{ \sqrt{(2 j +1) / 2 } \cdot P_j (x), \quad j=0..J  \Bigr\},
\] 
plays the role of standard basis.
	\item Wavelets, for instance Haar wavelets  
	\begin{eqnarray*}
		\psi_{j}^{m} (x)  =\Biggl\{  \frac{1}{\sqrt{\delta}}, \qquad \frac{1}{\sqrt{\delta}}\Biggl(I\{x \in [a+\delta/2, a+\delta ]\} - I\{x \in [a,a+\delta/2]\} \Biggr)
		\Biggr\},
	\end{eqnarray*}
where \(\delta\) is usually taken as \(2^{-l}\) for some \(l \in \N\). This role of standard interval is  usually given to the interval \([\tilde{a}, \tilde{b}]=[0,1]\) supplied with two functons
\[\Bigl\{\widetilde{\psi}_j (x)\Bigr\} 
=
	\Bigl\{ 
		 1, \quad  I\{x \in [1/2, 1]\} - I\{x \in [0,1/2]
	\Bigr\} .
\]
\end{enumerate}
To sum up, basis functions typically depend on \(\delta\) as it is given by \eqref{psij}, where the function \(\tilde{\psi}_{j}\) are bounded and supported on some compact \( [\tilde{a}, \tilde{b}]\).
For theoretical studies, we assume that the function \(\sqrt{\delta}\psi_{j}^m (x)\) and its total variation are bounded by some absolute constants \(\C_{1}\) and \(\C_{2}\), that is, for all \(j,m\),
\begin{eqnarray}\label{cond1}
	\sqrt{\delta} \cdot \sup_{x \in I_{1}}|\psi_{j}^m (x) | \leq \C_{1}, \qquad	\sqrt{\delta}\cdot V_{a}^{a+\delta}
	(
		\psi_{j}^m
	)
	\leq  \C_{2},
\end{eqnarray}
where by \(V_{a}^{a+\delta}
	(
		\psi_{j}^m
	)
	\) we denote the total variation of the function \(\psi_{j}^m\), 
	\[
	V_{a}^{a+\delta}
	(
		\psi_{j}^m
	)
	:= \sup_{
			\|P\| \to 0
		} 
		\sum_{i=1}^{n} 
			\left| 
				\psi_{j}^m (x_{i}) - \psi_{j}^m (x_{i-1})
			\right|,\]
\(P\) ranges over the partitions \(a=x_{0}<x_{1}<...<x_{n}=a+\delta\) equipped with the norm \(\|P\|=\max_{i}|x_{i}-x_{i-1}|\).

\sec{Projection estimates.} Consider the \(L^{2}\)-scalar product and \(L^{2}\)-norm in the space of functions \(\left\{ g: D \to \R\right\}\), and introduce the estimator \(\tilde{s}(x)\) as  the orthogonal projection of the funciton \(s(x)\) on \(\L\) with respect to this norm: 
\begin{eqnarray}
\label{tildes}
	\tilde{s}(x) := \sum_{r=1}^{d} \beta_{r} \varphi_{r}(x),
\end{eqnarray} where 
\[
	\beta_{r} =  \beta(\varphi_{r}) = \int_{D} \varphi_{r}(x) s(x) dx = \int_{D} \varphi_{r}(x) \tilde{s}(x) dx.
\]
Returning to the statistical problem, that is, to the problem of statistical estimation  of \(\tilde{s}(x)\) by the  equidistant observations \(X_{0}, X_{\Delta}, ..., X_{n \Delta}\), we realize that the main difficulty consists in estimation of  \(\beta(\varphi)\) for different basis functions \(\varphi\). As it was explained earlier, there exists a crucial difference  in the assumptions on the design. It turns out, that in case of the low-frequency setup, this question is not well-understood in the literature.  As for the high-frequency setup, estimation of \(\beta(\varphi)\)  has been extensively studied  in \cite{FL4} and  \cite{Woerner}, where it is shown that the coefficients of \(\beta_{r}\) can be estimated by 
\begin{eqnarray}
\label{hb}
	\hat\beta (\varphi_{r}) = \frac{1}{n \Delta} \sum_{k=1}^{n} 
	\varphi_{r} \left( X^{(k)}_{\Delta} \right), \qquad \mbox{where} \quad
	X^{(k)}_{\Delta} = X_{k\Delta} - X_{(k-1)\Delta}.
\end{eqnarray}
Next, we can plug in the estimator \(\hat\beta (\varphi_{r})\) in \eqref{tildes}, and get that 
\begin{eqnarray}
\label{hsdef}
	\hs_{n}(x):=\sum_{r=1}^{d} \hat{\beta}(\varphi_{r}) \varphi_{r}(x)
	=
	\frac{1}{n \Delta} \sum_{r=1}^{d} 
	\left[
		\sum_{k=1}^{n} 
		\varphi_{r} \left( X^{(k)}_{\Delta} \right) 
	\right] 
	\varphi_{r} (x),
\end{eqnarray}
is a reasonable estimator for the L{\'e}vy density \(s(x)\). 

\section{Main results}
\label{mainres}
In this section, we present our results related to the projection estimator \(\hat{s}(x)\) of the L{\'e}vy density \(s(x)\). First note that by Corollary~8.9 from \cite{Sato}, we immediately get that  \(\hat{\beta}(\varphi_{r})\) defined by \eqref{hb} is a consistent estimator of \(\beta (\varphi_{r})\). Nevertheless, for establishing some theoretical facts, we have to introduce additional assumptions on the rate of this convergence. As in \cite{Fig3}, we assume that the following small-time asymptotic property holds: there exists positive constants \(q\) and \(\Delta^{0}\) such that 
\begin{eqnarray}
\label{supp}
\sup_{x \in D} \left| 
\frac{1}{\Delta} \P \left\{ 
	X_{\Delta} \geq x
\right\} - \nu \left( 
	[x, +\infty)
\right)
\right| < q \Delta, \qquad \forall \; 0<\Delta<\Delta^{0}.
\end{eqnarray}
For instance, this property is fulfilled  when \(s\) is Lipshitz in an open set containing \(D\) and uniformly bounded on \(|x|>M\) for any positive \(M\) (see Proposition~2.1 from \cite{Fig3}).  

In this  paper, we consider the case of high-frequency data with \(T \to \infty\) as \(n \to \infty\). Moreover, the parameter \(m\), which indicates the number of intervals in our construction of the set of basis functions, also tends to infinity with \(n\). With no doubt, the rates of  growth \(m,n,T\) should be somehow coordinated. This can be done in different ways. Mainly for technical reasons, we assume that \(T=n^{\kappa}\) with some \(\kappa>0\) .  Since in high-frequency setup \(\Delta=T/n \to 0\), we get that \(\kappa<1\). The main advantage of such choice is that our further assumptions reduce to only one more restriction on the speed of \(m\), which is presented below by \eqref{lambdan}.

We start the analysis of the distribution of \(\DDD_{n}\) with a technical result related  the ``bias'' of \(\DDD_{n}\), which we  denote below  by \(\Z_{n}\). This result allows to reformulate the problem of finding the asymptotic behaviour of  the distribution function of   \(\Z_{n}\) in terms of Gaussian processes. 
\begin{prop}
\label{FG}
Assume that \eqref{supp} holds. Denote 
\begin{eqnarray*}
	\Z_{n}:=		\sup_{x \in D} \left(	
			\frac{
								\left|
					\hs_{n} (x)  - \E\hs_{n} (x)  
				\right|
			}
			{	
				\sqrt{s(x)}
			}
		\right), \qquad
	F_{n}(u):= \P \left\{
		\Z_{n}  \leq u
	\right\}.
\end{eqnarray*}
Let  \(m\) tend to \(\infty\) as \(n \to \infty\), and moreover \(T = n^{\kappa}\) for some \(\kappa \in (0,1)\), and 
\begin{eqnarray}
\label{lambdan}
	\Lambda_{n}:=m \frac{\sqrt{\log n}}{n^{\kappa/2}} \to 0, \qquad \mbox{as}\quad n \to \infty.
\end{eqnarray}
Then there exist positive constants \(c_{1}, c_{2}, \lambda\) such that
\begin{eqnarray}
\label{Fst1}
		F_{n}\Bigl(u\Bigr) 
		& \leq & 
		\left(
			\breve{F}\left(
				\sqrt{\frac{T}{b-a}} \; u + c_{1} \Lambda_{n}
			\right)
		\right)^{m} +  c_{2} n^{-\lambda},\\
		\label{Fst2}
				F_{n}\Bigl(u\Bigr) 
		& \geq & 
		\left(
			\breve{F}\left(
				\sqrt{\frac{T}{b-a}}\; u - c_{1} \Lambda_{n}
			\right)
		\right)^{m} -  c_{2} n^{-\lambda},			
		\end{eqnarray}
where by \(\breve{F}(\cdot) \) we denote the distribution function  of the random variable 
\begin{eqnarray}
\label{zeta}
	\zeta =\zeta^{J,m} := \sup_{x \in [a, a+\delta)} 
	\left|  \p^{J,m}(x) \right|, \qquad  \p(x)=\p^{J,m}(x):=\sum_{j=0}^{J} Z_{j} \psi_{j}^m (x)
\end{eqnarray}
with i.i.d. standard normal r.v.'s  \(Z_{j},\; j=0, ... , J\). 
\end{prop}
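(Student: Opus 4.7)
The plan is to reduce $\Z_n$ to the maximum of $m$ asymptotically i.i.d.\ copies of $\zeta^{J,m}$ via a block-by-block strong Gaussian approximation. I would first exploit the block structure of the basis: since every $\varphi_r$ is supported on exactly one of the disjoint intervals $I_p$, setting $\varphi_{j,p}(y):=\psi_j^m(y-\delta(p-1))\,\I\{y\in I_p\}$, for $x\in I_p$ one has $\hat s_n(x)-\E\hat s_n(x)=\sum_{j=0}^{J}\bigl(\hat\beta(\varphi_{j,p})-\beta(\varphi_{j,p})\bigr)\psi_j^m(x-\delta(p-1))$, so that
\begin{equation*}
\Z_n \;=\; \max_{1 \le p \le m}\; \sup_{x \in I_p} \frac{\bigl|\sum_{j=0}^{J}\bigl(\hat\beta(\varphi_{j,p}) - \beta(\varphi_{j,p})\bigr)\psi_j^m(x-\delta(p-1))\bigr|}{\sqrt{s(x)}}.
\end{equation*}
For each $p$, the coefficient vector $V^{(p)} := \bigl(\sum_{k=1}^{n}\bigl[\varphi_{j,p}(X_\Delta^{(k)}) - \E\varphi_{j,p}(X_\Delta^{(k)})\bigr]\bigr)_{j=0}^{J}$ is a sum of $n$ i.i.d.\ bounded random vectors in $\R^{J+1}$; by \eqref{supp}, the orthonormality of $\{\psi_j^m\}$ on $I_1$, and continuity of $s$, each summand has covariance $\Delta\,s(x_p)$ times the identity, up to lower-order terms, for any reference point $x_p\in I_p$.

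The second step is to couple the vectors $(V^{(p)})_{p=1}^{m}$ to an independent family of Gaussian blocks and invert through the supremum. The disjointness $\varphi_{j,p}\varphi_{j',p'}\equiv 0$ for $p\neq p'$ forces the cross-covariances of distinct blocks to be of lower order $O(n\Delta^2)$. I would then apply a multivariate Koml\'os--Major--Tusn\'ady / Sakhanenko / Zaitsev-type strong approximation to couple $(V^{(p)})_{p=1}^{m}$ to a centred Gaussian vector with \emph{independent} blocks $G^{(p)}$, each with covariance $n\Delta\,s(x_p)$ times the identity, with $\ell^\infty$-error of order $\sqrt{n\log n}$ on an event of probability at least $1-c_2 n^{-\lambda}$. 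Normalising by $n\Delta=T$, replacing $\sqrt{s(x_p)}$ by $\sqrt{s(x)}$ on $I_p$ at the additional $O(\delta)$ cost, and recalling \eqref{zeta}, this coupling yields
\begin{equation*}
\sup_{x \in I_p} \frac{\bigl|\hat s_n(x) - \E\hat s_n(x)\bigr|}{\sqrt{s(x)}} \;=\; T^{-1/2}\,\zeta^{(p)} \;+\; O\bigl(\Lambda_n / \sqrt{T}\bigr),
\end{equation*}
where $\zeta^{(1)},\ldots,\zeta^{(m)}$ are i.i.d.\ copies of $\zeta^{J,m}$. Taking the maximum over $p$ and using monotonicity of $\breve F$, the additive error $O(\Lambda_n/\sqrt T)$ translates into the $c_1\Lambda_n$ shift inside $\breve F\bigl(\sqrt{T/(b-a)}\,u \pm c_1\Lambda_n\bigr)^{m}$ (the precise scaling $\sqrt{T/(b-a)}$ arising from the normalisation of $\psi_j^m$ on $I_1$), while the exceptional event of probability $\le c_2 n^{-\lambda}$ contributes the remainder term in \eqref{Fst1}--\eqref{Fst2}.

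The main obstacle is producing the Gaussian coupling with an error quantitatively sharp enough that, after division by $\sqrt{T}$ and propagation through the supremum on each of the $m$ blocks, it fits as the additive $c_1\Lambda_n$ perturbation inside the argument of $\breve F$, while leaving only the polynomial remainder $c_2 n^{-\lambda}$. The $\sqrt{\log n}$ in $\Lambda_n$ is the characteristic price of KMT-type approximations, the factor $m$ reflects accumulation across blocks, and condition \eqref{lambdan} is precisely the regime in which these perturbations are dominated by the natural scale of $\zeta^{J,m}$. A secondary technicality is the uniform control of the modulus of continuity of the interpolating Gaussian process across all blocks, for which the total-variation bound \eqref{cond1} on $\psi_j^m$ is the key input.
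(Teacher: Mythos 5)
Your outline takes a genuinely different route from the paper, and in its central step it has a gap that is not merely technical. The paper does \emph{not} couple the coefficient vectors blockwise: it applies the classical one-dimensional Koml\'os--Major--Tusn\'ady theorem to the empirical process of the uniformized increments $F_\Delta(X^{(k)}_\Delta)$, obtaining a single Brownian bridge within uniform distance $O(\log n/\sqrt n)$, and then transfers this \emph{uniform} error to all $m(J+1)$ coefficients simultaneously by integration by parts against the basis functions --- this is exactly where the sup-norm and total-variation bounds \eqref{cond1} are used. A chain of further reductions (bridge to Brownian motion, time change, replacement of $(1-F_\Delta)/\Delta$ by $\int_\cdot^{\infty}s$ via \eqref{supp}, the identity $W(\int_\cdot^\infty s)\eqd\int_\cdot^\infty\sqrt{s}\,dW$, and extraction of $\sqrt{s(x)}$) then lands on a white-noise integral, for which the blocks are \emph{exactly} independent and the $\zeta_p$ are exactly i.i.d.; a final chaining lemma accumulates the shifts $\delta_i$ and exceptional probabilities $\gamma_i$ into $c_1\Lambda_n$ and $c_2n^{-\lambda}$.

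Your proposal instead invokes a multivariate KMT/Sakhanenko/Zaitsev coupling for the vector $(V^{(p)})_{p=1}^m$ in dimension $m(J+1)$, which grows with $n$, and whose summands have sup-norm $O(\sqrt m)$ rather than $O(1)$. All available strong-approximation bounds of this type degrade polynomially in the dimension, so the asserted $\ell^\infty$ coupling error on an event of probability $1-c_2n^{-\lambda}$ does not follow from a citation; it is precisely the quantitative heart of the proposition, and the paper's detour through the one-dimensional empirical d.f.\ exists to avoid it. Moreover your stated error $\sqrt{n\log n}$ is internally inconsistent with your conclusion: since an $\ell^\infty$ perturbation $E$ of $V^{(p)}$ moves the block supremum by about $E\sqrt{m}/T$, matching the allowed shift $c_1\Lambda_n\sqrt{(b-a)/T}\asymp m\sqrt{\log n}/T$ forces $E\lesssim\sqrt{m\log n}$, which is far smaller than $\sqrt{n\log n}$ in the regime $m=o(n^{\kappa/2})$ of \eqref{lambdan}. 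Finally, even granting a coupling to a Gaussian vector with the true covariance, the cross-block correlations are only $O(\Delta)$ small, not zero; obtaining the exact $m$-th power $\bigl(\breve F(\cdot)\bigr)^m$ requires an additional Gaussian comparison (Slepian/Berman-type) step with quantitative error control, which you assert but do not supply. Either repair these three points or switch to the paper's empirical-process coupling.
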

\begin{proof}
	Proof is given in Appendix~A.1 from our preprint~\cite{panov2014a}. 
\end{proof}

Proposition~\ref{FG} allows to  reduce the original problem to the problem of estimating the asymptotic distribution of the supremum of the absolute value of the  Gaussian process \(\p^{J,m}(x):=\sum_{j=0}^{J} Z_{j} \psi_{j}^{m}(x)\)  on the interval \([a, a+\delta)\), where \(x\) plays the role of time.  The next step is 
to infer on the asymptotic behavior of \(\zeta^{J,m}\). 

To the best of our knowledge, there exists no clear theory revealing the asymptotic behaviour of the supremum of any Gaussian process (as well as the asymptotic behaviour of the supremum of absolute values). The  comprehensive overview of this topic is given in \cite{Adler} and \cite{Piterbarg}. Let us mention one interesting result in this direction, the theorem by Marcus and Shepp \cite{MS}, which states that  if a centered Gaussian process \(G_{t}\)  has bounded sample paths with probability 1, then the logarithmic asymptotics is given by
\begin{eqnarray}
\label{Gt}
	\lim_{u \to \infty} \frac{
		\log \P \left\{	
			\sup_{t \in K} G_{t} \geq u
		\right\} 
	}
	{
		u^{2}
	} 
	=
	- \frac{1}{2\sigma_{K}^{2}},
\end{eqnarray}
where \(K\) is a subset of \(\R\) and \(\sigma_{K}^{2} = \sup_{t \in K} \E G_{t}^{2}\). It turns out that the supremum of  \(G_{t}\)   behaves much like a single Gaussian variable \(\xi\) with zero-mean and variance \(\sigma^{2}=\sigma_{K}^{2}\), because the logarithmic asymptotics of  \(\xi\) is the same, i.e., 
\begin{eqnarray*}
	\lim_{u \to \infty} \frac{
		\log \P \left\{	
			\xi \geq u
		\right\} 
	}
	{
		u^{2}
	} 
	=
	- \frac{1}{2\sigma^{2}}.
\end{eqnarray*}
Nevertheless, the main term in the asymptotics of \(\P\{\xi \geq u \}\) is given by 
\begin{eqnarray*}
		\P \left\{	
			\xi \geq u
		\right\} 
	= \frac {\sigma}{u \sqrt{2 \pi}}
	\exp \Bigl(
		- \frac{u^{2}}{2\sigma^{2}}
	\Bigr) \Bigl( 
		1 + o (1)
	\Bigr), \qquad u \to +\infty,
\end{eqnarray*}
which do not necessary coincide with the asymptotics of \(\P\{\sup G_{t} \geq u \}\).

In the next sections, we closely consider the Gaussian processes \(\p^{J,m}(x)\), defined by \eqref{zeta}, where \(\{\psi^{m}_{j}(x)\}\) are the sets of basis functions  listed in Section~\ref{basis}. The next theorem demonstrates the asymptotic behavior of the distribution function of the r.v. 
\begin{eqnarray*}
 \tzeta =\tzeta^{J,m} := \sup_{x \in [a, a+\delta)} 
	 \p^{J,m}(x).\end{eqnarray*}
Later on, we will derive from this theorem the asymptotic behavior of   \(\zeta^{J,m}\).
 \begin{thm}
 \label{main}
 	Let \(u\) grow as \(\delta \to 0\) so that \(\sqrt{\delta}u \to \infty\). Then 
	\begin{eqnarray}
	\longboxed{
	\label{zetaas} 
		\P \left\{ 
			\tzeta^{J,m} \geq u 
		\right\} 
		= \frac{g_{1}(J) }{ \left( \sqrt{\delta} u \right)^{\k}} e^{-g_{2}(J) \cdot \delta u^{2}}
		\left(
			1 + \tau(\sqrt{\delta} u)
		\right),
		}
	\end{eqnarray}
where \(\tau(x) \to 0\) as \(x \to \infty\). Moreover,
\begin{enumerate}[(i)]
\item in case of trigonometric basis,  
\begin{eqnarray*}
	\k =0, \qquad g_{1}(J)  = 
	\left( 
		2 J^{-1}
		 \sum_{j=1}^{J/2} j^{2}
	\right)^{1/2}, \qquad g_{2}( J)  =  (2 J)^{-1};
\end{eqnarray*}
\item in case of Legendre polynomials, 
\begin{eqnarray*}
	\k = 1, \qquad g_{1}( J)  = \sqrt{2} (J+1)/\sqrt{\pi} , \qquad g_{2}(J)  =2^{-1}\left(J+1 \right)^{-2} ;
\end{eqnarray*}
\item in case of wavelets, 
\begin{eqnarray*}
	\k = 1, \qquad g_{1}(J) = 2/ \sqrt{\pi}, 
	\qquad g_{2}(J) =1/4.
\end{eqnarray*}
\end{enumerate}
Furthermore, for some cases the asymptotic behavior of the function \(\tau(x)\) as \(x \to \infty\) is known. In particular, in case (i), \[\tau(x) =\sqrt{J}\left(\sqrt{2\pi}x \cdot g_{1}(J)  \right)^{-1} \left( 1 + o(1) \right);\] in case (iii), \(\tau(x)= O(x^{-2})\). \end{thm}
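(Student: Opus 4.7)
The plan is to treat the three bases separately, after a common rescaling. By \eqref{psij} and the affine substitution $y=\tilde{a}+(\tilde{b}-\tilde{a})(x-a)/\delta$,
\[
\tzeta^{J,m} \eqd \sqrt{(\tilde{b}-\tilde{a})/\delta}\,\sup_{y\in[\tilde{a},\tilde{b}]}\sum_{j=0}^J Z_j\,\widetilde\psi_j(y),
\]
reducing the problem to the large-$v$ tail of the fixed, $\delta$-independent Gaussian process $\widetilde\p_0(y):=\sum_j Z_j\widetilde\psi_j(y)$ on $[\tilde{a},\tilde{b}]$ evaluated at $v=u\sqrt{\delta/(\tilde{b}-\tilde{a})}$. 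This explains why $\sqrt\delta u$ is the natural scaling variable, and reduces the question to tabulating $g_1(J),\,g_2(J),\,\k$ from the $\widetilde\psi_j$'s alone.

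For case (iii) (Haar wavelets), which is essentially elementary, $\p^{J,m}$ is piecewise constant, taking the two values $(Z_0\pm Z_1)/\sqrt\delta$ on the two halves of $[a,a+\delta)$. These are independent $\mathcal{N}(0,2/\delta)$ variables, so
\[
\P\{\tzeta^{J,m}\ge u\}=2\,\overline\Phi\bigl(u\sqrt{\delta/2}\bigr)-\overline\Phi\bigl(u\sqrt{\delta/2}\bigr)^2,\qquad \overline\Phi:=1-\Phi.
\]
Mill's ratio $\overline\Phi(x)=(2\pi)^{-1/2}e^{-x^2/2}\bigl(x^{-1}-x^{-3}+O(x^{-5})\bigr)$ then yields both the stated leading order $(g_1,g_2,\k)=(2/\sqrt\pi,\,1/4,\,1)$ and the $O(x^{-2})$ form of $\tau$.

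For case (i) (trigonometric basis), the process is stationary (periodic in $x-a$) with constant variance $\sigma^2=(J+1)/\delta$ and covariance $r(t)=\delta^{-1}+2\delta^{-1}\sum_{j=1}^{J/2}\cos(2j\pi t/\delta)$, whence $(\sigma')^2:=-r''(0)=(8\pi^2/\delta^3)\sum_{j=1}^{J/2}j^2$. I would appeal to the classical Rice--Piterbarg asymptotic for smooth stationary Gaussian processes (\cite{Piterbarg}, Ch.~8),
\[
\P\Bigl\{\sup_{t\in[0,L]} G(t)\ge u\Bigr\}\sim\frac{L\,\sigma'}{2\pi\,\sigma}\,\exp\Bigl(-\frac{u^2}{2\sigma^2}\Bigr),\qquad u\to\infty,
\]
with $L=\delta$; substituting the values above directly produces $\k=0$ and the constant $g_1(J)$, while the next term of the Rice expansion yields the explicit form of $\tau$ stated in case (i).

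The main obstacle is case (ii) (Legendre). The process is non-stationary, and by the Christoffel--Darboux identity together with $P_j(\pm 1)=(\pm 1)^j$ the variance $\sigma^2(x)=\delta^{-1}\sum_{j=0}^J(2j+1)P_j^2(2(x-a)/\delta-1)$ attains its maximum $(J+1)^2/\delta$ precisely at the two endpoints $x=a$ and $x=a+\delta$. Consequently the tail of $\tzeta^{J,m}$ is governed by the local behaviour of $\p^{J,m}$ in small neighbourhoods of each endpoint. My strategy is to apply a non-stationary Piterbarg-type extremum theorem for Gaussian processes whose variance is maximized at an isolated boundary point (\cite{Piterbarg}, Thm.~D.2), after computing the local Taylor expansions of $\sigma^2$ (the coefficient of linear decay away from $\pm 1$ arising from $P_j'(\pm 1)=\pm(-1)^j j(j+1)/2$) and of the correlation function. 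Summing the two symmetric endpoint contributions produces the exponent $\k=1$, the constant $g_1(J)=\sqrt{2}(J+1)/\sqrt\pi$ and $g_2(J)=1/(2(J+1)^2)$. The delicate steps here are the precise verification of the Piterbarg-type hypotheses in the non-stationary boundary regime and the pinning down of the multiplicative constant $g_1(J)$; I expect this to be where most of the technical work lies.
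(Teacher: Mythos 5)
Your overall architecture coincides with the paper's: rescale to a fixed, $\delta$-independent process via \eqref{psij}, then treat (iii) by direct computation, (i) by stationary extreme-value theory, and (ii) by Piterbarg's non-stationary theory for a process whose variance is maximized at the two endpoints. For (ii) the paper applies Corollary~8.3 of \cite{Piterbarg} after verifying exactly the conditions you list — linear decay of $\sigma$ at $\pm1$ through $P_j'(\pm1)=\pm(-1)^{j+1}j(j+1)/2$, quadratic decay of the correlation, and a global H\"older bound — and, since the variance drops off faster (order $|x\mp1|$) than the correlation (order $|x-y|^2$), the tail reduces to the sum of the two pointwise contributions $2(1-\Phi(u))$; this is the mechanism behind your constants, and your sketch is consistent with it. Your case (iii) is identical to the paper's: $1-\Phi^2(x/\sqrt2)=2(1-\Phi(x/\sqrt2))-(1-\Phi(x/\sqrt2))^2$ is the same factorization you obtain from independence of $Z_0\pm Z_1$. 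The one genuine methodological difference is case (i): you invoke the Rice upcrossing asymptotic, whereas the paper uses the Pickands theorem (with $H_2=1/\sqrt{\pi}$) for the leading term and Piterbarg's refined formula (F.3) for the second-order term $\tau$; for a smooth ($\alpha=2$) stationary process the two leading constants agree, so your route is sound and arguably cleaner.

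Two points you should address before this becomes a proof. First, the Pickands/Rice statements as usually quoted require $r(t)<1$ for all $t>0$, and this fails here: the trigonometric process has period exactly equal to the length of the interval, so $r$ returns to $1$ at the right endpoint. The paper needs a separate $\varepsilon$-splitting argument to justify the leading term, and its second-order formula additionally requires checking that a certain determinant $\DD$ is nonzero; your proposal is silent on both (the Rice route is less sensitive to the periodicity, but you should say why). Second, your computed variance $(J+1)/\delta$ feeds into the Rice formula to give $g_2=1/(2(J+1))$ and $g_1=\bigl(2(J+1)^{-1}\sum_{j=1}^{J/2}j^2\bigr)^{1/2}$, not the stated $1/(2J)$ and $\bigl(2J^{-1}\sum j^2\bigr)^{1/2}$. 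This $J$ versus $J+1$ discrepancy is in fact present in the paper itself (which normalizes by $\sqrt{J}$ while the trigonometric system as defined contains $J+1$ functions), so reconcile the count of basis functions before finalizing the constants rather than treating the agreement as automatic.
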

 \begin{proof}
 As it was mentioned before,  there is no unified approach to find the asymptotics of the distribution of Gaussian process. Since the methodology crucially depends on the properties of covariance function,  we separately prove this result for different basis functions, see Sections \ref{statsec} - \ref{wav}. In the case of trigonometric basis (Section~\ref{statsec}), we efficiently use that the considered Gaussian process is stationary, and apply the Pickands theorem (\cite{Michna}, \cite{Piterbarg}) and some further results on this topic. In the case of Legendre polynomials (Section~\ref{Leg}), we take into account that the variance of the process attains its maximum only in finite number of points, and apply  the double sum method described in \cite{Piterbarg}. Finally, in case of wavelets (Section~\ref{wav}), we directly calculate the asymptotic behaviour of \(\P\{\zeta \geq u \}\).
 \end{proof}
 \begin{rem}
 \label{rr}
In what follows, we will use the following trivial corollary from~\eqref{zetaas}. Let \(u\) grow with \(m\) so that \(u/\sqrt{m} \to \infty\). Then 
 	\begin{eqnarray}
	\label{zetaas2} 
		\P \left\{ 
			\tzeta^{J,m} \geq u 
		\right\} 
		= \frac{h_{1} \; m^{k/2}}{  u^{\k}} 
		\exp\left\{
			-h_{2} \;  u^{2}/ m
		\right\}
			\left(
				1 + \btau\left(
					u / \sqrt{m}
				\right)
			\right), 
	\end{eqnarray} 
	where  \(h_{1} = h_{1}(J):=g_{1}(J) \cdot (b-a)^{-k/2}\),  \(h_{2} =h_{2}(J):= g_{2}(J) \cdot (b-a),\) and \( \btau(x) = \tau \left(\sqrt{b-a} \cdot x\right)\).
 \end{rem}
 Using \eqref{zetaas}, we can derive similar result for the supremum of the absolute value of the Gaussian process.
 \begin{cor} 
 \label{corcor}
 	In the assumptions and notations of the last remark, 
	 \begin{eqnarray}
	\label{zetaas2} 
		\P \left\{ 
			\zeta^{J,m} \geq u 
		\right\} 
		= 2 \frac{h_{1} \; m^{k/2}}{  u^{\k}} 
		\exp\left\{
			-h_{2} \;  u^{2}/ m
		\right\}
			\left(
				1 + \btau\left(
					u / \sqrt{m}
				\right)
			\right). 
	\end{eqnarray} 
 \end{cor}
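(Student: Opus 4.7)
The plan is to deduce the corollary from Theorem~\ref{main} (equivalently, from Remark~\ref{rr}) by a symmetry and inclusion--exclusion argument, reducing the absolute-value supremum to the signed supremum. Since $\p^{J,m}(x)=\sum_{j=0}^{J}Z_{j}\psi_{j}^{m}(x)$ with $Z_{j}$ i.i.d.\ standard normal, the map $Z_{j}\mapsto -Z_{j}$ shows $\{-\p^{J,m}(x)\}_{x}\eqd\{\p^{J,m}(x)\}_{x}$, hence $\sup_{x}(-\p^{J,m}(x))\eqd\tzeta^{J,m}$. Writing $\zeta^{J,m}=\max\bigl(\sup_{x}\p^{J,m}(x),\sup_{x}(-\p^{J,m}(x))\bigr)$ and applying inclusion--exclusion gives
\[
\P\{\zeta^{J,m}\ge u\}\;=\;2\,\P\{\tzeta^{J,m}\ge u\}\;-\;\P\bigl\{\sup_{x}\p^{J,m}(x)\ge u,\ \sup_{x}(-\p^{J,m}(x))\ge u\bigr\}.
\]

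The first term already has the asymptotic expansion of Remark~\ref{rr}, so the work reduces to showing that the joint probability on the right is $o\!\left(\P\{\tzeta^{J,m}\ge u\}\right)$ as $u/\sqrt{m}\to\infty$. On the joint event the oscillation $\operatorname{osc}(\p^{J,m}):=\sup_{x}\p^{J,m}(x)-\inf_{x}\p^{J,m}(x)$ is at least $2u$, so it suffices to bound $\P\{\operatorname{osc}(\p^{J,m})\ge 2u\}$ with a strictly faster exponential rate than the leading rate $e^{-h_{2}u^{2}/m}$ of $\P\{\tzeta^{J,m}\ge u\}$. I would argue this case-by-case, following the same decomposition as in the proof of Theorem~\ref{main}: (i)~for the trigonometric basis, $\p^{J,m}$ is stationary, so a Pickands-type double-sum estimate applied to the vector field $(\p^{J,m},-\p^{J,m})$ gives joint exceedance of order $(\P\{\tzeta^{J,m}\ge u\})^{2}$, which is exponentially smaller; (ii)~for Legendre polynomials the variance attains its maximum on a finite set, and the double-sum method from \cite{Piterbarg} adapts to the joint field; (iii)~for Haar wavelets, $\p^{J,m}$ takes only two values on $[a,a+\delta)$ (of the form $(Z_{0}\pm Z_{1})/\sqrt{\delta}$), so $\P\{\zeta^{J,m}\ge u\}-2\P\{\tzeta^{J,m}\ge u\}$ reduces to $-\P\{|Z_{1}|\ge u\sqrt{\delta}\,\}$, of order $e^{-u^{2}\delta/2}$ versus the rate $e^{-u^{2}\delta/4}$ of $\P\{\tzeta^{J,m}\ge u\}$.

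Once the joint probability is shown to be $o(\P\{\tzeta^{J,m}\ge u\})$, the inclusion--exclusion identity yields $\P\{\zeta^{J,m}\ge u\}=2\P\{\tzeta^{J,m}\ge u\}\bigl(1+o(1)\bigr)$, and substituting the expansion from Remark~\ref{rr} produces the claimed formula, with the same constants $h_{1},h_{2}$, an explicit factor of $2$, and a redefined error term $\breve{\tau}$ that absorbs the extra $o(1)$.

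The main obstacle is the sharpness of the joint exceedance bound. A generic Borell--TIS estimate applied to $\operatorname{osc}(\p^{J,m})$, which is only $2\sigma$-Lipschitz in the Cameron--Martin norm, yields decay $\exp(-u^{2}/(8\sigma^{2}))$, which is \emph{slower} than the rate $\exp(-u^{2}/(2\sigma^{2}))$ governing $\P\{\tzeta^{J,m}\ge u\}$. Consequently no general concentration inequality suffices, and one must exploit the specific covariance structure of each basis (stationarity, finite-point variance maximum, or finite-dimensional range) to beat the exponent. This is where the bulk of the technical effort of the corollary resides.
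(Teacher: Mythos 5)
Your decomposition is correct and is essentially the route the paper takes: the paper also reduces $\P\{\zeta^{J,m}\ge u\}$ to $2\,\P\{\tzeta^{J,m}\ge u\}$ plus a cross term that decays at a strictly faster exponential rate, and it too treats the wavelet case by direct computation (there $\zeta=2^{l/2}(|Z_0|+|Z_1|)$, and the Gaussian-process machinery is unavailable anyway because the trajectories are discontinuous). The only substantive difference is how the cross term is controlled. Rather than running a double-sum argument for the vector field $(\p,-\p)$, the paper invokes a ready-made result (Corollary~6.4 in the 1988 Russian edition of Piterbarg's book): under bounded mean and variance, a global H\"older condition, and the hypothesis that the correlation function is separated from $-1$, i.e.\ $\rho(t,s)>-1+\delta$, one has $\P\{\sup|X|>u\}=\P\{\sup X>u\}+\P\{\sup(-X)>u\}+O\bigl(\exp(-u^{2}(1+\rho)/(2\sigma^{2}))\bigr)$ for some $\rho>0$. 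That separation condition is exactly the structural fact your oscillation discussion is circling around, and verifying it is elementary: for the trigonometric basis $\rho(t)\ge 2/J-1>-1+1/J$, and for Legendre polynomials a Cauchy--Schwarz argument using $P_{0}\equiv 1$ shows $\rho>-1$ on the compact square. Two small inaccuracies in your sketch, neither fatal: in the stationary case the joint exceedance is \emph{not} of order $(\P\{\tzeta^{J,m}\ge u\})^{2}$ — its exponential rate is governed by $1-\rho_{\min}$, not by asymptotic independence — and in the Haar case the cross term is $\P\{|Z_{1}|-|Z_{0}|\ge u\sqrt{\delta}\}$ rather than $\P\{|Z_{1}|\ge u\sqrt{\delta}\}$; in both instances the rate is still strictly better than $\exp(-u^{2}/(2\sigma^{2}))$, which is all the corollary needs since the correction is then absorbed into $\btau$.
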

 \begin{proof}
 The proof can be found in our preprint~\cite{panov2014a}, Appendix~A.2.
\end{proof}
Proposition~\ref{FG} and Theorem~\ref{main} yield the following theorem, which shows the asymptotic distribution of the maximal deviation \(\Z_{n}\).
\begin{thm}
\label{main2}
Let the assumptions on the relation between  \(m, n\)  and \(T\) introduced in Proposition~\ref{FG} be fulfilled. Denote for any \(y \in \R\),
\begin{eqnarray}
\label{un}
	u_{m}:= 
			\frac{y}{
			a_{m}}
			+ \left(
				b_{m}-\frac{c_{m}}{b_{m}}
			\right),
\end{eqnarray}
where 
\begin{eqnarray}
\label{abc}
	a_{m} := 2 h_{2} b_{m}, \quad 
	b_{m}:=
	\sqrt{
\frac{1}{ h_{2}}  \ln\left(
	h_{1} m
\right)}, \quad
	c_{m}:= \frac{k}{2h_{2}} \ln b_{m}.
\end{eqnarray}
Then for any \(y \in \R\),
\begin{eqnarray*}
\longboxed{
 	\P \left\{
		\sqrt{\frac{T}{m}}
		\Z_{n}
		\leq u_{m} 
	\right\} =
	e^{- 2 e^{-y}} \left( 
			1 + R(m)
		\right),}
\end{eqnarray*}
where 
\begin{equation}
\label{s}
R(m) := 
		\btau \left( u_m \right) 
		 - 
		\frac{k}{4 \sqrt{h_2} } 
		\frac{\ln \ln m}{\sqrt{\ln m}} 
		\left( 
			1 + o(1)
		\right) \to 0, \; \mbox{ as } m \to \infty.
\end{equation}
\end{thm}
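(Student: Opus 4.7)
The proof chains Proposition~\ref{FG} with Corollary~\ref{corcor}: Proposition~\ref{FG} reduces \(F_n\) to \([\breve{F}(\cdot)]^m\) up to an additive \(c_2 n^{-\lambda}\) and an argument shift \(\pm c_1\Lambda_n\); Corollary~\ref{corcor} supplies the precise tail of \(\breve{F}\). The theorem then reduces to checking that \(u_m\) from \eqref{un}--\eqref{abc} is exactly the extreme-value normalization that sends the product \([\breve{F}(\cdot)]^m\) to the Gumbel law \(e^{-2 e^{-y}}\), at the prescribed rate \(R(m)\).

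First, I would substitute \(u = u_m\sqrt{m/T}\) in \eqref{Fst1}--\eqref{Fst2}; the argument of \(\breve{F}\) then becomes \(w_m^{\pm} := u_m\sqrt{m/(b-a)} \pm c_1\Lambda_n\). Since \(u_m \asymp b_m \asymp \sqrt{\ln m}\) and \(b-a\) is fixed, \(w_m^{\pm}/\sqrt{m}\to\infty\), so Corollary~\ref{corcor} is applicable and yields
\[
m\bigl(1-\breve{F}(w_m^{\pm})\bigr) \;=\; 2\,\frac{h_1\, m^{1+k/2}}{(w_m^{\pm})^{k}}\,\exp\!\Bigl(-\tfrac{h_2(w_m^{\pm})^{2}}{m}\Bigr)\bigl(1+\btau(w_m^{\pm}/\sqrt{m})\bigr).
\]
The additive \(c_2 n^{-\lambda}\) is polynomial in \(n\) and is absorbed into \(R(m)\).

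The core of the proof is then to show \(m(1-\breve{F}(w_m^{\pm})) \to 2 e^{-y}\). The identity \(h_2 b_m^{2} = \ln(h_1 m)\) is precisely what cancels the \(h_1 m\) prefactor against the leading \(e^{-h_2 b_m^{2}}\); the correction \(-c_m/b_m\) inside \(u_m\) is tuned to absorb the polynomial factor \((w_m^{\pm})^{-k}\). Concretely, a two-term Taylor expansion of \(u_m^{2}\) around \(b_m^{2}\), using \(2 b_m/a_m = 1/h_2\) and \(2c_m = (k/h_2)\ln b_m\), gives
\[
h_2 u_m^{2} = \ln(h_1 m) + y - k\ln b_m + h_2(u_m-b_m)^{2},
\]
which after substitution into the prefactor and exponential produces the claimed \(2 e^{-y}\) up to \(R(m)\). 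The shift \(\pm c_1\Lambda_n\) acts multiplicatively on \(m(1-\breve{F})\) as \(\exp(O(\Lambda_n\sqrt{\ln m/m}))\), which is \(1+o(1)\) by \eqref{lambdan}, and the passage \([\breve{F}]^m = \exp(-m(1-\breve{F}) + O(1/m))\) introduces only an \(O(1/m)\) additional error. The upper and lower sandwich bounds from Proposition~\ref{FG} thus collapse to the same limit \(e^{-2 e^{-y}}(1+R(m))\).

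\textbf{Main obstacle.} The most delicate part is identifying the explicit form of \(R(m)\) in \eqref{s}. One must retain the sub-leading correction of order \(\ln\ln m/\sqrt{\ln m}\) - which is exactly the order of \(c_m/b_m\) itself - while showing that the other residuals (the \(c_2 n^{-\lambda}\), the shift error \(\Lambda_n\sqrt{\ln m/m}\), the Taylor term \(h_2(u_m-b_m)^{2} = O((\ln\ln m)^{2}/\ln m)\), and the \(O(1/m)\) from exponentiation) are all dominated by it under \eqref{lambdan}. The \(\btau(u_m)\) contribution, inherited directly from Corollary~\ref{corcor}, packages into the first summand of \eqref{s}; verifying that no further sub-leading term competes at the rate \(\ln\ln m/\sqrt{\ln m}\) is the careful bookkeeping that finishes the proof.
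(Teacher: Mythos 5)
Your proposal follows essentially the same route as the paper's proof in Appendix~A.3: combine Proposition~\ref{FG} with the tail asymptotics of Corollary~\ref{corcor}, pass to $[\breve F]^m=\exp\{-m(1-\breve F)(1+o(1))\}$, and use the identity $h_2u_m^2=\ln(h_1m)+y-k\ln b_m+h_2(u_m-b_m)^2$ to extract $2e^{-y}$, with the $\pm c_1\Lambda_n$ shift and the $c_2n^{-\lambda}$ term absorbed into $R(m)$ exactly as the paper does (the paper merely recasts the shift as a perturbation of $y$ of size $o(\sqrt{\ln n}/n)$ rather than as a multiplicative factor). The argument and the bookkeeping of sub-leading terms are correct and match the paper's.
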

\begin{proof}
The proof is given in Appendix~\ref{prooffg}.
\end{proof}
In the next theorem, we get the asymptotic distribution of \(\DDD_{n}\) given by \eqref{DDD} from the asymptotic distribution of its ``bias'' \(\Z_{n}\) defined in Proposition~\ref{FG} and later analyzed in Theorem \ref{main2}.
\begin{thm}
\label{main3}
	In the assumptions and notations of Theorem~\ref{main2},
	\begin{eqnarray*}
\boxed{
 	\P \left\{
		\sqrt{\frac{T}{m}}
		\Z_{n}
		\leq u_{m} + \cc n^{3 \kappa/2-1} m^{1/2}
	\right\} =
	e^{-2 e^{-y}} \left( 
			1 + R(m)
		\right)
		}
\end{eqnarray*}
with \(\cc=q J \C_1 (\C_1 + \C_2) /(b-a)\), where \(\C_{1}, \C_{2}\) are defined by \eqref{cond1}, and \(q\) is defined in \eqref{supp}.
\end{thm}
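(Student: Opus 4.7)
The plan is to derive Theorem~\ref{main3} from Theorem~\ref{main2} by absorbing the deterministic bias $\E\hat s_n(x) - s(x)$ into the shift $\cc\,n^{3\kappa/2-1}m^{1/2}$. Since $\DDD_n$ and $\Z_n$ differ only through centering (at $s(x)$ versus $\E\hat s_n(x)$), a triangle inequality gives
\[
\big|\DDD_n - \Z_n\big| \;\leq\; B_n \;:=\; \sup_{x\in D} \frac{|\E\hat s_n(x) - s(x)|}{\sqrt{s(x)}},
\]
and writing $\mathrm{shift} := \cc\,n^{3\kappa/2-1}m^{1/2}$ I would then establish the two-sided bound
\[
\P\!\Big(\sqrt{\tfrac{T}{m}}\Z_n \leq u_m\Big) \leq \P\!\Big(\sqrt{\tfrac{T}{m}}\DDD_n \leq u_m + \mathrm{shift}\Big) \leq \P\!\Big(\sqrt{\tfrac{T}{m}}\Z_n \leq u_m + 2\,\mathrm{shift}\Big),
\]
conditional on verifying $\sqrt{T/m}\,B_n \leq \mathrm{shift}$. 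The left-hand side equals $e^{-2e^{-y}}(1+R(m))$ directly by Theorem~\ref{main2}; the right-hand side is handled by rewriting $u_m + 2\,\mathrm{shift}$ as $u_m$ evaluated at $y' = y + 2 a_m\,\mathrm{shift}$ and exploiting continuity of the Gumbel CDF, provided $a_m\,\mathrm{shift} \to 0$ under~\eqref{lambdan}.

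The central estimate is therefore $\sqrt{T/m}\,B_n \leq \cc\,n^{3\kappa/2-1}m^{1/2}$, i.e.\ $B_n \lsim \cc\,n^{\kappa-1}m$. To obtain this I would write $\E\hat s_n(x) = \sum_r \E\hat\beta(\varphi_r)\,\varphi_r(x)$, use that $s$ admits the representation $\sum_r \beta_r \varphi_r(x)$ on $D$ (so that the $L^2$-projection error vanishes), and decompose the bias coefficient-wise. For each $r$, expressing $\Delta\,\E\hat\beta(\varphi_r) = \E\varphi_r(X_\Delta) = \int\varphi_r(x)\,d(-\P(X_\Delta \geq x))$ as a Stieltjes integral and $\Delta\,\beta_r$ analogously against $-\nu([x,\infty))$, integration by parts over the support $[a,a+\delta]$ of $\varphi_r$ converts the uniform tail bound~\eqref{supp} into
\[
|\E\hat\beta(\varphi_r) - \beta_r| \;\leq\; q\Delta\,\Big(\sup_x|\varphi_r(x)| + V_a^{a+\delta}(\varphi_r)\Big) \;\leq\; \frac{q\Delta\,(\C_1+\C_2)}{\sqrt{\delta}},
\]
using~\eqref{cond1}. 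Since at most $J+1$ basis functions are non-zero at any given $x \in I_p$, each bounded by $\C_1/\sqrt{\delta}$, summing over them and plugging in $\Delta = n^{\kappa-1}$, $\delta = (b-a)/m$ (together with $\inf_D s > 0$) yields $B_n \lsim q(J+1)\C_1(\C_1+\C_2)(b-a)^{-1}\,n^{\kappa-1}m$, matching $\cc\,n^{\kappa-1}m$ up to the usual $J\leftrightarrow J+1$ indexing convention. Multiplying by $\sqrt{T/m} = n^{\kappa/2}/\sqrt{m}$ reproduces exactly $\cc\,n^{3\kappa/2-1}m^{1/2}$.

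The main obstacle is the integration-by-parts estimate: one has to isolate the boundary contribution (controlled by $\C_1$) from the total-variation contribution (controlled by $\C_2$) cleanly, while respecting that~\eqref{supp} only provides a one-sided tail bound on $\P(X_\Delta \geq x)$. A secondary but genuine technical check is that $a_m\,\mathrm{shift} = O\!\big(n^{3\kappa/2-1}\sqrt{m\log m}\big) = o(1)$ under~\eqref{lambdan}, which is needed so that the Gumbel reparametrisation on the right-hand side of the sandwich does not contaminate the remainder $R(m)$; one also needs the implicit assumption that $s$ is already represented exactly by $\tilde s$ in $\L$, or that the $L^2$-projection error is itself dominated by $\mathrm{shift}$.
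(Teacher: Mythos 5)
Your proposal is correct and follows essentially the same route as the paper's proof in Appendix~\ref{A3}: the per-coefficient bound you derive by integration by parts against $-\P\{X_\Delta\ge x\}/\Delta$ and $-\nu([x,\infty))$ is exactly the estimate the paper imports as Lemma~B.1 of \cite{Fig3}, and combining it with \eqref{cond1}, $M_\Delta(I_p)\le q\Delta$ from \eqref{supp}, and the triangle inequality reproduces \eqref{bias} and \eqref{f64}. Your two-sided sandwich with the Gumbel reparametrisation $y\mapsto y+a_m\cdot\mathrm{shift}$ is in fact more careful than the paper's one-sided \eqref{f64}, and the caveats you flag (the implicit identification $\tilde s=s$ on $D$, the lower bound on $\inf_D s$, and the need for $a_m\,n^{3\kappa/2-1}m^{1/2}\to 0$, which requires a restriction on $\kappa$ or $m$ beyond \eqref{lambdan} alone) are genuine points the paper glosses over.
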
 
\begin{proof}
	The proof is given in Appendix~\ref{acorcor}.
\end{proof} 
Note that here we have the usual trade-off between the deviation \((\hs_{n}(x) - \E\hs_{n}(x))\) and the bias \((\E\hs_{n}(x) - s(x))\). In fact, while \(u_m\) (which is `''responsible'' for the deviation) decays with \(m\), the second term \(\cc n^{3 \kappa/2-1} m^{1/2}\) (which is `''responsible'' for the bias) grows.  In this respect, the optimal choice of \(m\) is \(n^{2-3 \kappa}\), which is possible under the assumption \(\kappa \in (4/7, 2/3)\).

According to Theorem~\ref{main}, the function \(\btau(x) = \tau \left(\sqrt{b-a} \cdot x\right)\) is known  for some sets of basis functions. For instance, in case of trigonometric polynomials, \[\tau(u_{m})= C u_{m}^{-1} (1+o(1)) = C  \frac{\sqrt{h_{2}}}{\sqrt{\ln m}}(1 + o(1)),\] where the constant \(C\) can be explicitly computed from Theorem~\ref{main}. Since \(k=0\), we get 
\begin{equation*}
R(m) =\left(	
\frac{J h_{2}}{ 2\pi (b-a) h_{1}^{2}}
\right)^{1/2} \cdot \frac{1}{\sqrt{\ln m}}(1 + o(1)), \qquad m \to \infty.
\end{equation*} 
In case of wavelets, \(\tau(u_{m})=O(u_{m}^{-2}) = O(1/\ln m)\) and \(k=1\),  and therefore 
\begin{equation*}
R(m) = - 
		\frac{1}{4 \sqrt{h_2} } 
		\frac{\ln \ln m}{\sqrt{\ln m}} 
		\left( 
			1 + o(1)
		\right), \qquad m \to \infty.
\end{equation*}
Therefore, the rates of convergence are typically of logarithmic order. Nevertheless, at least in the case of trigonometric basis, we can also find a sequence of accompanying laws, which approximate the distribution of \(\DDD_{n}\) with polynomial rate. The next theorem clarifies this point. 
\begin{thm} \label{thm35}  
Consider the case of trigonometric basis and let the assumptions of 
Theorem~\ref{main2} be fulfilled, and moreover \(J \geq b-a\). Define the sequence of distribution functions
\begin{equation*}
A_{m}(y):=\begin{cases}
\exp \left\{ -2\exp\left\{-y-\frac{y^{2}}{ 4 \ln(h_{1} m)} \right\}
- 2 m\left(
1-\Phi \left(
	u_{m} \sqrt{\frac{b-a}{J}}
\right)
\right)\right\}, \\&\hspace{-2.2cm}\mbox{if }y\geq -b_{m}^{3/2}, \\ 
0,  &\hspace{-2.2cm}\mbox{if }y<-b_{m}^{3/2},%
\end{cases}   
\end{equation*}
where \(u_{m}\) and \(b_{m}\) are defined by \eqref{un}-\eqref{abc}, and \(\Phi(\cdot)\) is the distribution function of the standard normal distribution. Denote 
\[
A_{m}^{-} (y) = A_{m}\left(
		y-\bar{c} n^{3 \kappa/2-1} m^{1/2}b_{m}
	\right), \quad 
A_{m}^{+} (y) = A_{m}\left(
		y + \bar{c} n^{3 \kappa/2-1} m^{1/2}b_{m}
	\right),
\]
where \(\bar{c}:= 2 h_{2}\cc\), and \(\cc\) was defined in Theorem~\ref{main3}.
Then for sufficiently large \(m\) and for any \(y\in \R\),
\begin{eqnarray}
\label{amm}
\boxed{
	A_{m}^{-}(y) 
	- \frac{C}{m ^{\beta}}
	\leq
 	\P \left\{
		\sqrt{\frac{T}{m}}
\Z_{n}
		\leq u_{m}
	\right\}
	\leq 
	A_{m}^{+}(y) 
	+\frac{C}{m^{\beta}}
}
\end{eqnarray}
with some positive constants \(C\) and \(\beta\).
\end{thm}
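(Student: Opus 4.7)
The plan is to sharpen Theorem~\ref{main2} by retaining, inside the exponential, every correction that decays only logarithmically in $m$, thereby upgrading the error from logarithmic to polynomial. Combining Proposition~\ref{FG} with the bias-shift argument that proves Theorem~\ref{main3}, inequality~\eqref{amm} reduces to showing that $(\breve F(v_m^{\pm}))^m = A_m(y \pm \bar c\, n^{3\kappa/2-1}m^{1/2}b_m) + O(m^{-\beta})$, where $v_m^{\pm}$ is the argument of $\breve F$ corresponding to $u_m$ perturbed by $\pm c_1\Lambda_n$ together with the bias shift $\pm\cc n^{3\kappa/2-1}m^{1/2}$ (the latter passed through $a_m = 2h_2 b_m$ to the $y$-scale). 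The additive remainder $c_2 n^{-\lambda}$ in Proposition~\ref{FG} is already polynomial in $m$, since $m = o(n^{\kappa/2})$ by~\eqref{lambdan}, and may be absorbed.

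Next I would write
\begin{equation*}
\bigl(\breve F(v_m)\bigr)^m = \exp\bigl(-m p_m + O(m p_m^2)\bigr),\qquad p_m := \P\{\zeta^{J,m}\geq v_m\},
\end{equation*}
and observe that for the relevant $v_m$ one has $m p_m = O(1)$, so the truncation remainder $m p_m^2 = O(1/m)$ is already polynomial. In the trigonometric case ($k=0$), Corollary~\ref{corcor} gives $p_m = 2 h_1\exp(-h_2 v_m^2/m)(1+\breve\tau(v_m/\sqrt m))$, reducing the task to a polynomially precise expansion of $m p_m$.

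Plugging $u_m = y/a_m + b_m$ (with $c_m = 0$ since $k=0$) into $v_m$ and using $a_m = 2h_2 b_m$, $b_m^2 = \ln(h_1 m)/h_2$, a direct expansion of the quadratic $u_m^2 = b_m^2 + y/h_2 + y^2/(4 h_2 \ln(h_1 m))$ yields
\begin{equation*}
2 h_1 m\exp(-h_2 v_m^2/m) = 2\exp\Bigl(-y - \tfrac{y^2}{4\ln(h_1 m)}\Bigr)\bigl(1 + O(m^{-\beta})\bigr)
\end{equation*}
on the range $y\geq -b_m^{3/2}$. The factor $(1+\breve\tau(v_m/\sqrt m))$ is then matched against the Mills-ratio expansion $m\bigl(1-\Phi(u_m\sqrt{(b-a)/J})\bigr) \sim \phi(u_m\sqrt{(b-a)/J})\sqrt J/(u_m\sqrt{2\pi(b-a)})$: Theorem~\ref{main}(i) provides $\tau(x) = \sqrt J/(\sqrt{2\pi}\,x\,g_1(J))(1+o(1))$, which is exactly the Mills-ratio form at the right argument. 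Combining both contributions gives $m p_m = 2\exp(-y - y^2/(4\ln(h_1 m))) + 2 m(1-\Phi(u_m\sqrt{(b-a)/J})) + O(m^{-\beta})$, and exponentiating yields $(\breve F(v_m))^m = A_m(y) + O(m^{-\beta})$; for $y<-b_m^{3/2}$, both sides of~\eqref{amm} are super-polynomially small and match the convention $A_m\equiv 0$.

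The main obstacle is the matching step above: Theorem~\ref{main}(i) supplies only the leading constant of $\tau$ with an unquantified $o(1)$ remainder, whereas polynomial control demands an explicit $O(x^{-\beta'})$ bound on $\tau(x)$ minus its leading Mills-ratio term. This forces one to revisit the Pickands-type derivation of Section~\ref{statsec} for the stationary process arising from the trigonometric basis and extract the next-order term in that expansion, which I expect to be the most delicate technical step. The extra hypothesis $J\geq b-a$ appears precisely to ensure $u_m\sqrt{(b-a)/J}\leq u_m$, keeping the single-point normal tail subdominant to the full supremum asymptotic and making the Mills-ratio identification consistent.
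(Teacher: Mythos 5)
Your architecture is exactly the paper's: reduce via Proposition~\ref{FG} to $(\breve F(\cdot))^m=\exp\{-m p_m+O(mp_m^2)\}$, expand $h_1 m\exp\{-h_2 u_m^2\}=\exp\{-y-y^2/(4\ln(h_1m))\}$ exactly from the definition of $u_m$ (with $c_m=0$ for $k=0$), retain the single-point normal tail $2m(1-\Phi(u_m\sqrt{(b-a)/J}))$ inside the accompanying law, treat $y<-b_m^{3/2}$ separately by super-polynomial smallness, and push the bias through the scaling $a_m=2h_2b_m$. But the step you flag as "the main obstacle" is not a side issue --- it is the entire content of the theorem, and leaving it as "I expect to be the most delicate technical step" means the proof is not done. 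Without an exact two-term tail expansion whose remainder is \emph{exponentially} smaller than the main term, the matching of $(1+\breve\tau)$ against the Mills ratio only produces an unquantified $o(1)$ relative error in $mp_m$, which after exponentiation gives back a logarithmic rate --- precisely what the theorem is designed to beat.

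The paper fills this gap in Section~\ref{statsec} (subsection on the asymptotic behavior of $\tau$): after verifying the non-degeneracy condition $\DD\neq0$ for the trigonometric process, Piterbarg's formula (F.3) yields the exact decomposition \eqref{Piter}--\eqref{rhorho}, $\P\{\sup\wp_c>u\}=\sqrt{2c}\,e^{-u^2/2}+(1-\Phi(u))+\rho(u)$ with $|\rho(u)|\lesssim e^{-u^2(1+\chi)/2}$ for some $\chi>0$. This is the input you need; note it is not a "next-order term of the Pickands expansion" (Pickands only gives relative $o(1)$) but a different, exact identity requiring the determinant computation. This also corrects your reading of the hypothesis $J\geq b-a$: it is not about ordering the two main terms (both are kept in $A_m$ regardless), but about making $m|\rho(u_m\sqrt{(b-a)/J})|\lesssim m\cdot m^{-(1+\chi)\sqrt{J/(b-a)}(1-\eps)^2}$ decay polynomially, which uses $b_m^2\asymp 2J\ln m/(b-a)$ and $\sqrt{J/(b-a)}\geq1$. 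Finally, your uniform claim $mp_m=O(1)$ fails near $y=-b_m^{3/2}$, where $mp_m$ can grow like $\exp\{(\ln m)^{3/4}\}$; the truncation error is still harmless there because $e^{-mp_m}$ is then super-polynomially small, but this needs to be said rather than asserted via $mp_m=O(1)$.
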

\begin{proof}
Proof is given in Section~\ref{thm335}. 
\end{proof}
Note that the statement of the theorem is formally true with any choice of \(n, m\) satisfying the assumptions, but the main interest is drawn to the case 
\[
 n^{3 \kappa/2-1} m^{1/2}b_{m}\to 0, \qquad n,m \to \infty,
 \]
which occurs for instance when \(\kappa \in (0, 4/7)\), or when \(m=n^{\alpha}\) with 
\(0<\alpha<\min(2-3\kappa, \kappa/2)\) and \(\kappa \in (0, 2/3)\). In the later situation, \eqref{amm} can be reformulated in terms of L{\'e}vy distance, which is defined for any two distributions functions \(G_{1}\) and \(G_{2}\) by
\[
	L(G_{1}, G_{2}) = \inf \Bigl\{
		\eps>0: \; G_{1}(x-\eps) - \eps \leq G_{2}(x) \leq
		G_{1} (x+\eps) + \eps, \quad \forall x \in \R
	\Bigr\}.
\]
In fact,   the L{\'e}vy distance between \(A_{m}(y)\) and the distribution function 
\begin{eqnarray*}
	G_{m}(y) &:=& \P \left\{
		2 h_{2} b_{m} \left[
		\sqrt{\frac{T}{m}}
		\sup_{x \in D} \left(	
			\frac{
				\left|
					\hs_n (x)  -\E\hs_n (x)
				\right|
			}
			{	
				\sqrt{s(x)}
			}
		\right)
		-b_{m}
	\right]
	\leq y 
	\right\}\\ &=& 
	 	\P \left\{
		\sqrt{\frac{T}{m}}
		\sup_{x \in D} \left(	
			\frac{
				\left|
					\hs_n (x)  - \E\hs_n (x)  
				\right|
			}
			{	
				\sqrt{s(x)}
			}
		\right)
		\leq u_{m}
	\right\},
\end{eqnarray*}
is bounded for large \(m\) by 
\[
\boxed{
	L\left(
		A_{m}, G_{m}
	\right) 
	\leq \widetilde{C} / m^{\tilde{\beta}}, }
	 \qquad \widetilde{C}>0, \quad
	 \tilde{\beta} = \min\left(\beta, \left(1 - \frac{3\kappa+ \alpha}{2}\right)\frac{1}{\alpha}\right).
\]
So, Theorem~\ref{main3} yields that convergence to the Gumbel distribution is quite slow, and therefore we cannot state that for some realistic \(m\) the maximal deviation distribution is close to its asymptotic distribution. Such situations are typical for similar types of problems, see, e.g.,  \cite{Hall} and \cite{KP}. Nevertheless, from Theorem~\ref{thm35}, we get that the distance between maximal deviation distribution and  the distribution function \(A_{m}(y)\) converges to zero at polynomial rate.

 Note that the rate of convergence formally becomes  logarithmically slow if we expand
additionally the double exponent in the definition of \(A_{m}(y)\), that is,
\begin{equation}
\label{unverbesser}
\sup_{y \in \R}
\left|
 	\P \left\{
		\sqrt{\frac{T}{m}}
		\Z_{n}
		\leq u_{m}
	\right\}
	-
	e^{-2 e^{-y}}\right| = \frac{C}{\sqrt{\ln m}} (1+o(1))
\end{equation}
for some positive constant  $C$ and \(m \to \infty\). We clarify this point in Section~\ref{Taylor}.

In the next 3 sections, we separately consider different choices of basis functions: trigonometric basis, Legendre polynomials and wavelets.
\section{Stationary case}
\label{statsec}
\subsection{Trigonometric basis}
Let us consider the case of trigonometric basis, 
		\begin{eqnarray*}
\Bigl\{ 
	\psi_{j}^{m}(x), \; j=0..J
\Bigr\} 
&=&
\Bigl\{  
	\chi_{0}(x) = \frac{1}{\sqrt{\delta}}, \qquad\chi_{j}(x) = \sqrt{\frac{2}{\delta}} \cos\left( 
		2 j \pi (x-a) / \delta
	\right),
	 \\
&& \hspace{0.4cm}
\tilde\chi_{j}(x) = \sqrt{\frac{2}{\delta}}\sin\left( 
		2 j \pi (x-a) / \delta
	\right),
\qquad j=1 .. (J/2)
\Bigr\}
\end{eqnarray*}
on the interval \([a, a+\delta)\). Changing the variables in \eqref{zeta}: \(x \to \tau=(x-a)/ \delta\), we get
\begin{eqnarray*}
	\tzeta &=& \max_{\tau \in [0, 1]} \p (\tau),\\
	\p(\tau) &:=&
			\frac{Z_{0}}{\sqrt{\delta}}
			+ 
			\sqrt{
				\frac{2}{\delta}
			}
			\sum_{j=1}^{J/2}
			\left[
				Z_{j}
				\cos\left(
					2 \pi j \tau
				\right)
				+
				\tilde{Z}_{j}
				\sin\left(
					2 \pi j \tau
				\right)
			\right], \qquad \tau \in [0,1],
\end{eqnarray*}
where all \(Z_{j}\) and \(\tilde{Z}_{j}\) are i.i.d. standard normal r.v.'s.
Note that \(\p (\tau)\)  is a stationary Gaussian process with covariance function 
\begin{eqnarray*}
	r(\tau) = 
	\frac{1}{\delta}  + 
	\frac{2}{\delta}
	\sum_{j=1}^{J/2} 
	\cos\left(
		2 \pi j \tau 
	\right).
\end{eqnarray*}
It is a worth mentioning that the process \(\p(\tau)\) has the same mean (equal to zero) and covariance function as the process \( \sqrt{ J /\delta} \;\widetilde{\p}\left(2\pi\tau\right)\), where 
\begin{eqnarray*}
		\widetilde{\p} (\tau) &:=&
				\frac{1}{\sqrt{J}}
	\left\{
			Z_{0}
			+ 
			\sqrt{ 2 }
			\sum_{j=1}^{J/2}
			\left[
				Z_{j}
				\cos\left(
					 j  \tau 
				\right)
				+
				\tilde{Z}_{j}
				\sin\left(
					j \tau 
				\right)
			\right]
	\right\},
\end{eqnarray*}
and therefore the r.v. \(\tzeta\) has the same distribution as 
\[
	 \tzeta \eqd \sqrt{\frac{J}{\delta} }\cdot \sup_{\tau \in [0, 2 \pi]} \wp (\tau).
\]
Note that the  process \(\wp(\tau)\) doesn't depend on \(n\). 

\subsection{The Pickands theorem}
In this section, we prove Theorem~\ref{main} (i) by using the Pickands theorems. This theorem stands that for any continuous stationary Gaussian process \(X_{t}, \; t \in [0,M]\) with zero mean and covariance function satisfying the assumptions
\begin{eqnarray*}
	r(t) &=& \E\left[
		X_{t+s} X_{t}
	\right]
	=
	1-|t|^{\alpha} +o(|t|^{\alpha}), \qquad t \to 0,\\
	r(t)& <& 1, \qquad \forall t>0,
\end{eqnarray*}
the asymptotic behaviour of the probability \(\P\left\{ \sup_{t \in [0,M]}X_{t}>u\right\}\) as \(u \to \infty\) is given by
\begin{eqnarray*}
	\P\left\{ \sup_{t \in [0,M]}X_{t}>u\right\} = 
	H_{\alpha} M u^{2/\alpha} \left(
		1 - \Phi(u)
	\right)
	\left( 
		1  + o(1)
	\right), \qquad u \to \infty.
\end{eqnarray*}
The next result shows the asymptotics of  the distribution function of \(\tzeta\).
\begin{lem}  	Let \(u\) grow as \(\delta \to 0\) so that \(\sqrt{\delta}u \to \infty\). Then 
\label{stat}
\begin{eqnarray}
	\P \left\{
		 \tzeta
		 \geq u
	\right\} 
	&=&
	C \; \sqrt{\delta} \; u \; \left( 
	1 -	\Phi\left(
			u \sqrt{\delta}/\sqrt{J}
		\right)
	\right)
	 \; \left(
		1 + o(1)
	\right),
\label{zzeta}
\end{eqnarray}
where the constant \(C\) is equal to 
\[
	C:=\frac{2}{J}
	\sqrt{
		\pi \sum_{j=1}^{J/2} j^{2}
	}.
\]
\end{lem}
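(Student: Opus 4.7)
The plan is to reduce the problem to a direct application of the Pickands theorem via the distributional identity
\[
\tzeta \eqd \sqrt{J/\delta}\,\sup_{\tau \in [0, 2\pi]}\wp(\tau)
\]
already established above, since the process $\wp$ does not depend on $\delta$. Setting $v := u\sqrt{\delta/J}$, the assumption $\sqrt{\delta}\,u \to \infty$ forces $v \to \infty$, so the asymptotics of $\P\{\tzeta \geq u\}$ follow from those of $\P\{\sup_{[0,2\pi]}\wp > v\}$ as $v \to \infty$.

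First I would compute the covariance of $\wp$, namely
\[
\tilde r(\tau) = \frac{1}{J}\Bigl(1 + 2\sum_{j=1}^{J/2}\cos(j\tau)\Bigr),
\]
and Taylor-expand it at the origin to identify the Pickands exponent. Using $\cos(j\tau) = 1 - j^{2}\tau^{2}/2 + O(\tau^{4})$ one obtains
\[
\tilde r(\tau) = 1 - c\,\tau^{2} + O(\tau^{4}), \qquad c := \frac{1}{J}\sum_{j=1}^{J/2} j^{2},
\]
which gives $\alpha = 2$. Next I would verify the non-degeneracy condition $\tilde r(\tau) < 1$ for $\tau \in (0, 2\pi)$: equality demands $\cos(j\tau) = 1$ for every $j = 1, \ldots, J/2$, hence $\tau \in 2\pi\mathbb{Z}$, which is excluded from the open interval.

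Second, to bring the covariance into Pickands' canonical form I would apply the linear time change $s = \sqrt{c}\,\tau$: the rescaled process $Y_{s} := \wp(s/\sqrt{c})$ satisfies $\E[Y_{s}Y_{0}] = 1 - s^{2} + o(s^{2})$, and the interval $\tau \in [0, 2\pi]$ becomes $s \in [0, 2\pi\sqrt{c}]$. Pickands' theorem with $\alpha = 2$ and the classical constant $H_{2} = 1/\sqrt{\pi}$ yields
\[
\P\Bigl\{\sup_{\tau \in [0,2\pi]}\wp(\tau) > v\Bigr\} = \frac{2\pi\sqrt{c}}{\sqrt{\pi}}\, v\,(1-\Phi(v))(1 + o(1)), \quad v \to \infty.
\]
Substituting $v = u\sqrt{\delta/J}$ produces the prefactor
\[
\frac{2\sqrt{\pi c}}{\sqrt{J}} = \frac{2}{J}\sqrt{\pi\sum_{j=1}^{J/2}j^{2}},
\]
together with the factor $u\sqrt{\delta}$, which is exactly the claim.

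The main subtlety is that $\tilde r(2\pi) = 1$, so Pickands' hypothesis $r < 1$ on the closed positive part of the interval fails at the right endpoint. Because $\wp$ is $2\pi$-periodic, this is harmless: for any $\varepsilon > 0$ one has the sandwich $\sup_{[0, 2\pi - \varepsilon]}\wp \leq \sup_{[0, 2\pi]}\wp \leq \sup_{[0, 2\pi + \varepsilon]}\wp$, Pickands' theorem applies to both outer bounds on a translated copy, and the resulting asymptotics differ by a factor $1 + \varepsilon/(2\pi)$, which is sent to $1$ afterwards. Making this sandwich rigorous, and checking that the standard Pickands $o(1)$ is uniform in $v = u\sqrt{\delta/J}$, constitute the only technical work beyond the calculation sketched above.
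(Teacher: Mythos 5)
Your reduction, the choice of the rescaling constant $c=J^{-1}\sum_{j=1}^{J/2}j^{2}$, the identification of $\alpha=2$ and $H_{2}=1/\sqrt{\pi}$, and the final substitution $v=u\sqrt{\delta/J}$ all coincide with the paper's argument and the arithmetic of the constant $C$ checks out. The one step that does not work as written is your treatment of the degeneracy at the endpoint, which you correctly single out as the main subtlety. Your proposed upper bound $\sup_{[0,2\pi]}\wp\leq\sup_{[0,2\pi+\varepsilon]}\wp$ is useless for two reasons: first, by $2\pi$-periodicity of $\wp$ the two suprema are identical, so the inequality carries no information; second, and more importantly, Pickands' theorem cannot be applied to the interval $[0,2\pi+\varepsilon]$ at all, because the hypothesis $r(\tau)<1$ for all positive lags up to the interval length fails at the lag $\tau=2\pi$, which now lies strictly inside the range of admissible lags. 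So the upper half of your sandwich is exactly the case the theorem excludes, and "applying Pickands to a translated copy" does not rescue it.

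The correct patch — and what the paper actually does — is to keep both auxiliary intervals strictly shorter than the period: write
\[
\P\Bigl\{\sup_{[0,2\pi]}\wp>v\Bigr\}\leq\P\Bigl\{\sup_{[0,2\pi-\varepsilon]}\wp>v\Bigr\}+\P\Bigl\{\sup_{[2\pi-\varepsilon,2\pi]}\wp>v\Bigr\},
\]
use stationarity to identify the second term with $\P\{\sup_{[0,\varepsilon]}\wp>v\}$, and apply Pickands separately to the intervals of lengths $2\pi-\varepsilon$ and $\varepsilon$, on each of which $r(\tau)<1$ holds for all relevant lags. Dividing by the (valid) lower bound $\P\{\sup_{[0,2\pi-\varepsilon]}\wp>v\}$ shows the ratio of upper to lower bound tends to $1+\varepsilon/(2\pi-\varepsilon)$, and letting $\varepsilon\downarrow 0$ after $v\to\infty$ gives the stated asymptotics with the full length $2\pi$. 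With this replacement your proof is complete and is essentially the paper's. (Your worry about uniformity of the Pickands $o(1)$ in $v$ is not an issue: the process $\wp$ does not depend on $\delta$, so the limit is an ordinary $v\to\infty$ limit for a fixed process.)
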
 
\begin{rem}
\label{remmich}
Since for \(u \to \infty\), 
\begin{eqnarray}
\label{aphi}
	1 - \Phi(u) = \frac{1}{\sqrt{2 \pi} u} e^{-u^{2}/2}
	\left(
		1 - \frac{1}{u^2} + 
			o\left( 
				\frac{1}{u^2}
			\right)
		\right)
\end{eqnarray}
(see, e.g., \cite{Michna}), we get that \eqref{zzeta} is equivalent to
\begin{eqnarray*}
	\P \left\{
		 \tzeta
		 \geq u
	\right\} 
	&=&
	\frac{C \sqrt{J}}{\sqrt{2 \pi}}\; 
	\exp\{
		- u^{2} \delta / ( 2 J)
	\}
	 \; \left(
		1 + o(1)
	\right), \qquad u \to \infty,
\end{eqnarray*}
and therefore Lemma~\ref{stat} is equivalent to Theorem~\ref{main} (i).
\end{rem}

\begin{proof}To prove this result,  we apply the Pickands theorem, see \cite{Michna} or \cite{Piterbarg}, to the process 
\begin{eqnarray*}
		\wp_{c} (\tau) &:=&
				\frac{1}{\sqrt{J}}
	\left\{
			Z_{0}
			+ 
			\sqrt{ 2 } 
			\sum_{j=1}^{J/2}
			\left[
				Z_{j}
				\cos\left(
					 j  \tau / \sqrt{c}
				\right)
				+
				\tilde{Z}_{j}
				\sin\left(
					j \tau / \sqrt{c}
				\right)
			\right]
	\right\},
\end{eqnarray*}
where positive constant \(c\) will be defined later. Note that 
\[
\tzeta \eqd \sqrt{\frac{J}{\delta}} \cdot \sup_{\tau \in [0, 2 \pi \sqrt{c}]} \wp_{c} (\tau).
\]
The process \(\wp_{c}(\tau)\) is a continuous stationary centered Gaussian process with covariance function 
\begin{eqnarray*}
	\tilde{r}(\tau) = 
	\frac{1}{J}  + 
	\frac{2}{J}
	\sum_{j=1}^{J/2} 
	\cos\left(
		j \tau / \sqrt{c}
	\right).
\end{eqnarray*}
Choosing \(c= J^{-1}\sum_{j=1}^{J/2} j^{2}\), the covariance function \(r(\tau)\) allows the following decomposition near \(0\):
\begin{eqnarray*}
	r(\tau) = 1- \tau^{2} + o(\tau^{2}), \qquad \tau \to 0.
\end{eqnarray*}
It is worth mentioning that the last condition of the Pickands theorem, which states that \(r(\tau)  <1\) for all \(\tau>0\), doesn't hold in our situation. To avoid this difficulty, we apply the following approach, which was suggested  by Vladimir Piterbarg in private communication. The authors greatfully acknowledge his help.

The proposed approach is based on the observation that  the Pickands theorem is applicable on any interval \([0, t]\) where \(t\) is strictly smaller than \(2\pi \sqrt{c}\).  In fact, the covariance function of the process \(\wp(\tau)\) can be bounded by
\begin{eqnarray*}
	\tilde{r}(\tau)  \leq 
	\frac{1}{J} \Bigl(	
		1 + 2  J/2
	\Bigr) =1,
\end{eqnarray*}
and the last inequality is in fact an equality if and only if \(\tau\) is proportional to \(2\pi \sqrt{c}\). With this idea in mind, we define for \(q \in [0,2 \pi]\)
\begin{eqnarray*}
	M(q, u) := \P\left\{	
		\sup_{\tau \in [0, q \sqrt{c}]} \wp_{c}(\tau) > u 
	\right\} 
\end{eqnarray*}
In this notation, we are interested in the asymptotics of \(M(2\pi,u )\) as \(u \to \infty\). Fix some small \(\eps>0\) and  note that 
\begin{eqnarray*}
	 M(2 \pi-\eps, u) \leq M(2\pi,u ) \leq M (2\pi - \eps,u) + M(\eps,u).
\end{eqnarray*}
Next, we divide both parts of both inequalities by  \(M (2\pi - \eps, u)\) and apply the Pickands theorem, 
\begin{eqnarray*}
	\lim_{u \to \infty} \frac{ M(\eps,u)}{M (2\pi - \eps,u)} = 
	\frac{\eps}{2\pi - \eps} < 0.2 \eps.
\end{eqnarray*}
Since \(\eps\) was chosen arbitrary, we get 
\begin{eqnarray*}
	M(2 \pi, u) = H_{2} 2\pi\sqrt{c} u (1- \Phi(u)) (1+o(1)),
\end{eqnarray*}
where \(H_{2}=1/\sqrt{\pi}\) is the Pickands constant, see \cite{DK}. This observation completes the proof.
\end{proof}

\underline{\textbf{Example.}} 
Consider the case \(J=3\).  Let us analyze the distribution of the random variable
\begin{eqnarray*}
\sqrt{\frac{3}{\delta}} \cdot \sup_{\tau \in [0, 2 \pi]} 
		\wp (\tau) 
		&=&
		\frac{\sqrt{3}}{\sqrt{\delta}}
		\cdot
		\sup_{\tau \in [0, 2 \pi]} 
		\left\{
			\frac{Z_{0}}{\sqrt{3}} 
			+ 
			\sqrt{\frac{2}{3}}
			\left[
				Z_1
				\cos \tau 
				+
				Z_2
				\sin \tau 
			\right]
	\right\}\\
	&=&
			\frac{\sqrt{3}}{\sqrt{\delta}}
		\cdot
		\left\{
			\frac{Z_{0}}{\sqrt{3}} 
			+ 
			\sqrt{\frac{2}{3}}
			\sup_{\tau \in [0, 2 \pi]} 
			\left[
				Z_1
				\cos \tau 
				+
				Z_2
				\sin \tau 
			\right]
	\right\}.
\end{eqnarray*}
Note that the expression under supremum can be interpreted as the projection of the random vector \(\vec{Z} := (Z_{1}, Z_{2})\) on the direction \((\cos \tau, \sin \tau)\). Since the supremum is taken over all possible directions, we get that this supremum is equal to the length of the vector \(\vec{Z}\), which is distributed as \(\sqrt{\xi}\), where \(\xi\) has a \(\chi\)-squared distribution with 2 degrees of freedom.
Therefore 
\begin{eqnarray}
\label{int1}
\P \left\{
		 \sqrt{\frac{3}{\delta}} \cdot \sup_{\tau \in [0, 2 \pi]} 
		\wp (\tau) 
		 \geq u
	\right\} 
	=
	\int_{u \sqrt{ \delta} / \sqrt{3}}^{\infty}
	p(x) dx,
\end{eqnarray}
where \(p(u)\) is the convolution of the density of the r.v. \(\sqrt{1/3} Z_{0}\), which is equal to \(\sqrt{3/(2\pi)} \exp\{ -3 x^{2} /2\}\), and the denstiy of the r.v. \(\sqrt{\xi}\), which is equal to \((3/2) x \exp\{ -3x^{2}/4\} I\{x>0\}\). This density can be explicitly computed, 
\begin{eqnarray*}
	p(x) = \sqrt{\frac{2}{3}}  x e^{-x^{2}/2} \left(
		1 - \Phi(-\sqrt{2} x)
	\right)
	+ 
	\frac{1}{\sqrt{6\pi}} e^{-3x^{2}/2}.
\end{eqnarray*} The main term in the asymptotics of the integral in \eqref{int1} is given by 
\begin{eqnarray*}
	\sqrt{\frac{2}{3}}
	\int_{u\sqrt{\delta} / \sqrt{3} }^{\infty} x \exp\{-x^{2}/2\} dx = 
	\sqrt{\frac{2}{3}}
	\exp\{-u^{2} \delta / 6\},
\end{eqnarray*}
which in fact coincides with the representation from Remark~\ref{remmich}.

\subsection{Asymptotic behavior of  $\tau(x)$ as $x \to \infty$} 
 To find the second term in the asymptotics of \(\P\left\{ \tzeta \geq u \right\}\), we apply  techniques from  \cite{Piterbarg}, Section~F, to the process \(\wp_{c} (\tau)\). These techniques are based on the assumption that the following determinant  is not equal to zero:
\begin{equation*}
\DD :=\left\vert 
\begin{tabular}{llll}
$1$ & $0$ & $\left\langle S(0),S(t)\right\rangle $ & $\left\langle
S(0),S^{\prime }(t)\right\rangle $ \\ 
$0$ & $1$ & $\left\langle S^{\prime }(0),S(t)\right\rangle $ & $\left\langle
S^{\prime }(0),S^{\prime }(t)\right\rangle $ \\ 
$\left\langle S(0),S(t)\right\rangle $ & $\left\langle S(0),S^{\prime
}(t)\right\rangle $ & $1$ & $0$ \\ 
$\left\langle S^{\prime }(0),S(t)\right\rangle $ & $\left\langle S^{\prime
}(0),S^{\prime }(t)\right\rangle $ & $0$ & $1$%
\end{tabular}%
\right\vert, 
\end{equation*}%
where 
\[
S(t)=\sqrt{\frac{2}{J}}\left( \frac{1}{2},\cos \left( \frac{t}{\sqrt{c}}%
\right) ,\sin \left( \frac{t}{\sqrt{c}}\right) ,...,\cos \left( \frac{Nt}{%
\sqrt{c}}\right) ,\sin \left( \frac{Nt}{\sqrt{c}}\right) \right) ,
\]%
\[
S(0)=\frac{1}{\sqrt{J}}\left( 1,\sqrt{2},0,...,\sqrt{2},0\right) ,
\]%
\[
S^{\prime }(t)=\sqrt{\frac{2}{Jc}}\left( 0,-\sin \left( \frac{t}{\sqrt{c}}%
\right) ,\cos \left( \frac{t}{\sqrt{c}}\right) ,...,-N\sin \left( \frac{Nt}{%
\sqrt{c}}\right) ,N\cos \left( \frac{Nt}{\sqrt{c}}\right) \right) ,
\]%
\[
S^{\prime }(0)=\sqrt{\frac{2}{Jc}}\left( 0,0,1,0,2,...,0,N\right),
\]%
and \(N=J/2\). Denote 
\[
B=\left( 
\begin{tabular}{ll}
$\left\langle S(0),S(t)\right\rangle $ & $\left\langle S(0),S^{\prime
}(t)\right\rangle $ \\ 
$\left\langle S^{\prime }(0),S(t)\right\rangle $ & $\left\langle S^{\prime
}(0),S^{\prime }(t)\right\rangle $%
\end{tabular}%
\right) ,
\]%
where%
\begin{eqnarray*}
\left\langle S(0),S(t)\right\rangle &=& \frac{1}{J}\left( 1+2\sum_{k=1}^{N}\cos
\left( \frac{kt}{\sqrt{c}}\right) \right),\\
\left\langle S(0),S^{\prime
}(t)\right\rangle &=&-\frac{2}{J\sqrt{c}}\sum_{k=1}^{N}k\sin \left( \frac{kt}{%
\sqrt{c}}\right) ,  \\
\left\langle S^{\prime }(0),S(t)\right\rangle &=&\frac{2}{J\sqrt{c}}%
\sum_{k=1}^{N}k\sin \left( \frac{kt}{\sqrt{c}}\right) =-\left\langle
S(0),S^{\prime }(t)\right\rangle,\\
\left\langle S^{\prime }(0),S^{\prime
}(t)\right\rangle &=&\frac{2}{Jc}\sum_{k=1}^{N}k^{2}\cos \left( \frac{kt}{%
\sqrt{c}}\right).
\end{eqnarray*}
Note that%
\begin{equation}
-\frac{2N-1}{2N+1}=\frac{1}{J}\left( 1-2N\right) \leq \left\langle
S(0),S(t)\right\rangle \leq \frac{1}{J}\left( 1+2N\right) =1,  \label{a4}
\end{equation}%
\begin{equation}
-1=-\frac{2}{Jc}\sum_{k=1}^{N}k^{2}\leq \left\langle S^{\prime
}(0),S^{\prime }(t)\right\rangle \leq \frac{2}{Jc}\sum_{k=1}^{N}k^{2}=1
\label{a5}
\end{equation}%
The r.h.s. equality in (\ref{a4}) and (\ref{a5}) is attained only if $t/
\sqrt{c}=\left\{ 0\},\{2\pi \right\} .$The determinant $\DD $ has a block structure%
\[
\DD =\left\vert 
\begin{tabular}{ll}
$I$ & $B$ \\ 
$B$ & $I$%
\end{tabular}%
\right\vert,
\]
and therefore%
\begin{equation}
\DD =\left\vert I\right\vert \cdot \left\vert I-BI^{-1}B\right\vert
=\left\vert I -  B^{2}\right\vert .  \label{6}
\end{equation}%
Note that  $\DD \neq 0$ if and only if the number one is not an eigenvalue of
the matrix $B^{2}.$ It is well known that the eigenvalues of $B^{2}$ are
equal (with multiplicity) to the squares of the eigenvalues of the matrix $B
$ (\cite{Pro}, exercise 1077, p.145). Note that the direct computation of the eigenvalues is not quite trivial because  $B$ is not symmetric. So we
prove that $\DD \neq 0$ if we show that $\pm 1$ cannot be the eigenvalues
of the matrix $B.$ Suppose that $\lambda _{1}=1$ is the eigenvalue of $B$
and let $x=\left( a,b\right) $ be the corresponding eigenvector. Then we have%
\[
\left( 
\begin{tabular}{ll}
$\left\langle S(0),S(t)\right\rangle $ & $\left\langle S(0),S^{\prime
}(t)\right\rangle $ \\ 
$\left\langle S^{\prime }(0),S(t)\right\rangle $ & $\left\langle S^{\prime
}(0),S^{\prime }(t)\right\rangle $%
\end{tabular}%
\right) \left( 
\begin{tabular}{l}
$a$ \\ 
$b$%
\end{tabular}%
\right) =\left( 
\begin{tabular}{l}
$a$ \\ 
$b$%
\end{tabular}%
\right) ,
\]%
\begin{equation}
a\cdot \left\langle S(0),S(t)\right\rangle +b\cdot \left\langle
S(0),S^{\prime }(t)\right\rangle =a  \nonumber
\end{equation}%
\[
a\cdot \left\langle S^{\prime }(0),S(t)\right\rangle +b\cdot \left\langle
S^{\prime }(0),S^{\prime }(t)\right\rangle =b
\]%
Suppose that $\ a\neq 0$ (the case $b\neq 0$ can be consided analogously).
Denoting $d=b / a$ we obtain 
\begin{equation}
\left\langle S(0),S(t)\right\rangle +d\cdot \left\langle S(0),S^{\prime
}(t)\right\rangle =1  \label{a7}
\end{equation}%
\begin{equation}
-\left\langle S(0),S^{\prime }(t)\right\rangle +d\cdot \left\langle
S^{\prime }(0),S^{\prime }(t)\right\rangle =d  \label{a8}
\end{equation}%
Substituting $\left\langle S(0),S^{\prime }(t)\right\rangle $ from (\ref{a8}) into (\ref{a7}), we obtain

\[
\left\langle S(0),S(t)\right\rangle +d^{2}\cdot \left( \left\langle
S^{\prime }(0),S^{\prime }(t)\right\rangle -1\right) =1,
\]%
\begin{equation}
\left[ \left\langle S(0),S(t)\right\rangle -1\right] +d^{2}\cdot \left[
\left\langle S^{\prime }(0),S^{\prime }(t)\right\rangle -1\right] =0
\label{a9}
\end{equation}%
It follows from (\ref{a4}) and (\ref{a5}) that both terms in square brackets
in (\ref{a9}) are non positive and, hence this equality is possible if and
only if $\left\langle S(0),S(t)\right\rangle =1$ and this is possible if and
only if $t /\sqrt{c}=\left\{ 0\},\{2\pi \right\}.$ Therefore, \(\DD \ne 0\) and we are able to apply (F.3) from \cite{Piterbarg}, 
\begin{eqnarray}
\label{Piter}
\P\left\{
	\sup_{\tau \in [0, 2\pi \sqrt{c}]}\wp_{c} (\tau)  > u
\right\} 
=
\sqrt{2 c} e^{-u^{2}/2} + \left(
	1 - \Phi(u) 
\right) 
+ \rho(u),  
\end{eqnarray}
where \(|\rho(u)| \lesssim e^{-u^{2}(1+\chi)/2}, \; u \to \infty\) for some \(\chi>0\). Hence,  
\begin{eqnarray}
\nonumber
\P\left\{ 
\tzeta>u
\right\} 
&=&
\P\left\{ 
	\sqrt{\frac{J}{\delta}}
	\sup_{\tau \in [0, 2\pi \sqrt{c}]}\wp_{c} (\tau)
	> u
\right\}\\
\label{rhorho}
&=&
\sqrt{2 c} e^{-\delta u^{2}/2J} + \left(
	1 - \Phi(u \sqrt{\delta/J}) 
\right) 
+ \rho(u \sqrt{\delta/J}).
\end{eqnarray}
Note that the statement of the Theorem~\ref{main} (i) immediately follows from \eqref{rhorho}, and therefore the last lines of reasoning can be viewed as another  proof of this theorem. Moreover, from \eqref{rhorho} we get  the exact form of the remainder term \(\tau(x)\), because by \eqref{aphi},
\[ 
\tau(x):= \frac{	1 - \Phi(x /\sqrt{J}) }{\sqrt{2 c} e^{-x^{2}/2J}}  \asymp \frac{\sqrt{J}}{2 \sqrt{\pi c} x}, \qquad x \to \infty.
\]
This observation completes the proof. \newline

\subsection{Sequence of accompanying laws}
\label{thm335}
\underline{\textbf{Proof of Theorem~\ref{thm35}.}} 

The main idea of the proof is to show that the ``bias'' satisfies 
\begin{eqnarray*}
\sup_{y \in \R}
\left|
 	\P \left\{
		\sqrt{\frac{T}{m}}
		\sup_{x \in D} \left(	
			\frac{
				\left|
					\hs_n (x)  - \E \hs_n (x)  
				\right|
			}
			{	
				\sqrt{s(x)}
			}
		\right)
		\leq u_{m}
	\right\}
	-
	A_{m}(y) 
\right| \leq C m ^{-\beta},
\end{eqnarray*}
(see steps 1 and 2) and afterwards to study the entire deviation \(\DDD_{n}\) (step 3). \newline

\textbf{1.}  From the proof of Theorem~\ref{main2}  (see Appendix~\ref{proofmain2}), we get that 
\[\P\left\{
		\sqrt{\frac{T}{m}}
		\sup_{x \in D} \left(	
			\frac{
				\left|
					\hs_n (x)  - \E\hs_n (x)  
				\right|
			}
			{	
				\sqrt{s(x)}
			}
		\right)
		\leq b_{m} + \frac{y}{2 h_{2}b_{m}}
	\right\}
=
	e^{W_{m}} e^{o(W_{m})}+c_{2} n^{-\lambda},
\]
where 
\[
	W_{m} = -m \P\left\{
		\zeta^{J,m} \geq \sqrt{m} u_{m}
	\right\},
	\qquad 
	u_{m}:=	b_{m} + \frac{y}{2 h_{2} b_{m}}.
\]
Using \eqref{rhorho} and notations of Remark~\ref{rr}, we get
 \[  W_{m}
	=
	- 2 h_{1} m \exp\left\{-h_{2}  u_{m}^{2}\right\}-
	2 m\left(
		1-\Phi \left(u_{m} \sqrt{\frac{b-a}{J}}\right)
	\right)+R,
\]
where%
\begin{equation}
\left\vert R\right\vert \leq m \cdot \exp\left\{-\frac{(1+\chi )}{2} \sqrt{\frac{b-a}{J}}u_{m}^{2}\right\}.  \label{ee2}
\end{equation}%
By the definition of $u_{m}$,
\begin{equation}
h_{1} m \exp\left\{-h_{2}  u_{m}^{2}\right\} =
\exp \left\{ 
	- y- \frac{y^{2}}{4 \ln (h_{1} m)}
\right\}.
\end{equation}%
Note that for $y\geq - b_{m}^{3/2}$, for any $\varepsilon >0$ and sufficiently large $m$%
\begin{equation}
u_{m} = b_{m} + \frac{y}{2 h_{2} b_{m}}\geq b_{m} - \frac{1}{2 h_{2}}\sqrt{b_{m}}\geq (1-\varepsilon )b_{m}.
\label{31}
\end{equation}%
Hence, continuing the estimation of \(|R|\)  in \eqref{ee2}, we get that  for sufficiently small $\varepsilon >0$ and sufficiently large $m$ 
\begin{eqnarray*}
\left\vert R\right\vert &\leq& m \exp\left\{-\frac{(1+\chi )}{2} \sqrt{\frac{b-a}{J}}
\left(
	1- \eps
\right)^{2} b_{m}^{2}\right\}\\
&\lesssim&
 m \exp\left\{-(1+\chi ) \left(
	1- \eps
\right)^{2} \sqrt{\frac{J}{b-a}}
\ln (m)\right\}
\leq
C_{1}m^{-\beta},\qquad\beta>0, 
\end{eqnarray*}
because \(b_{m}^{2} \asymp \ln(m)/h_{2} =  2J \ln(m) /(b-a)\) by the definition of \(b_{m}\), \(J>b-a\) by our  assumption, and \(\eps\) can be chosen arbitrarly small.
Therefore,   we obtain that 
\begin{multline*}
\sup_{y\geq
-b_{m}^{3/2}}
\left|
 	\P \left\{
		\sqrt{\frac{T}{m}}
		\sup_{x \in D} \left(	
			\frac{
				\left|
					\hs_n (x)  - \E\hs_n (x)  
				\right|
			}
			{	
				\sqrt{s(x)}
			}
		\right)
		\leq b_{m} + \frac{y}{2 h_{2 }b_{m}}
	\right\}
	-
	A_{m}(y)
\right| \\
= \sup_{y\geq
-b_{m}^{3/2}}\left| 
	A_{m}(y)
\right| \cdot
\left|
	e^{R} - 1
\right|
\asymp \left| R \right|
\leq C_{1} m ^{-\beta}.
\end{multline*} 

\textbf{2.} For $y<-b_{m}^{3/2}$,
we obtain 
\begin{eqnarray*}
W_{m}
&=&
	-m \P\left\{
		\zeta^{J,m} \geq
		\sqrt{m} \left(  
			b_{m} + \frac{y}{2 h_{2} b_{m}}
		\right)
	\right\}\\
&\leq &	
-mP\left\{
\zeta^{J,m} >
\sqrt{m} \left(  
	b_{m} - \frac{1}{2 h_{2}}\sqrt{b_{m}}
\right)
\right\}\\
&= &
-2 h_{1} m \exp\left\{ 
	-h_{2} b_{m}^{2} + b_{m}^{3/2} - \frac{b_{m}}{4 h_{2}}\right\}\\
	&\lesssim&
	- 2 \exp \left\{
		 b_{m}^{3/2}
	\right\}
	\asymp
	-2  \exp \left\{
		\frac{ 
		 	(\ln m)^{3/4}
		}{
			 h_{2}^{3/4}
		}
	\right\}.
\end{eqnarray*}
Therefore,
\begin{multline*}
\sup_{y<
-b_{m}^{3/2}}
\left|
 	\P \left\{
		\sqrt{\frac{T}{m}}
		\sup_{x \in D} \left(	
			\frac{
				\left|
					\hs_n (x)  - \E\hs_n (x)  
				\right|
			}
			{	
				\sqrt{s(x)}
			}
		\right)
		\leq b_{m} + \frac{y}{2 h_{2 }b_{m}}
	\right\}
	-
	A_{m}(y)
\right| \\\leq
\exp\left\{
		- 2 \exp \left\{
		\frac{ 
		 	(\ln m)^{3/4}
		}{
			 h_{2}^{3/4}
		}
	\right\}
\right\} \lesssim m^{-K}
\end{multline*} 
for any positive \(K\). 

\textbf{3.} By \eqref{bias} and the first inequality in \eqref{f64}, we know that 
\begin{eqnarray*}
		\sqrt{\frac{T}{m}} \DDD_{n} \leq 
	\sqrt{\frac{T}{m}} \Z_{n}
	 +  \cc \cdot  n^{(3\kappa/2)-1} m^{1/2}.
	\end{eqnarray*}
Similarly, we get that 
\begin{eqnarray*}
		\sqrt{\frac{T}{m}} \DDD_{n} \geq 
	\sqrt{\frac{T}{m}} \Z_{n}
	 -  \cc \cdot  n^{(3\kappa/2)-1} m^{1/2}.
	\end{eqnarray*}
Therefore 
\begin{eqnarray*}
 	\P \left\{
		\sqrt{\frac{T}{m}}
		\DDD_{n}
		\leq u_{m}
	\right\}
	&\leq& 
 	\P \left\{
		\sqrt{\frac{T}{m}}
		\Z_{n}
		\leq u_{m} + \cc \cdot  n^{(3\kappa/2)-1} m^{1/2}
	\right\}
	+\frac{C}{m^{\beta}},\\
	 	\P \left\{
		\sqrt{\frac{T}{m}}
		\DDD_{n}
		\leq u_{m}
	\right\}
	&\geq& 
 	\P \left\{
		\sqrt{\frac{T}{m}}
		\Z_{n}
		\leq u_{m} - \cc \cdot  n^{(3\kappa/2)-1} m^{1/2}
	\right\}
	-\frac{C}{m^{\beta}}.
\end{eqnarray*}
This observation completes the proof of Theorem~\ref{thm35}.

\subsection{Taylor expansions for accompanying laws}
\label{Taylor}
In this subsection, we explain why it is impossible to get power rate of convergence  (``fast convergence'') of the maximal deviation distribution to the Gumbel distribution  looking at Theorem~\ref{thm35}. This result is very typical for such problems, see e.g. \cite{KP}.

Note that  \(A_{m}^{\pm}(y)\) for large \(m\) can be represented as 
\begin{eqnarray*}
	A_{m}^{\pm}(y) = g_{1}\left(
		y \pm \bar{c} n^{3 \kappa/2-1} m^{1/2}b_{m}
	\right) \cdot 
	g_{2}\left(
		y \pm \bar{c} n^{3 \kappa/2-1} m^{1/2}b_{m}
	\right),
\end{eqnarray*}
where
\begin{eqnarray*}
	g_{1}(y) := \exp \left\{ 
		-2e^{-y-y^{2} \lambda_{n}}
	\right\} ,\qquad 
	g_{2}(y) := e^{
	- 2 m\left(
		1-\Phi \left(
			u_{m}(y) \sqrt{ (b-a)/J}
		\right)
	\right)},
\end{eqnarray*}
and \(\lambda_{n}:=1/\left( 4 \ln(h_{1} m) \right)\).
Let us apply several times the Taylor theorem to the function \(g_{1}(\cdot)\). First, taking into account that \(\bar{c} n^{3 \kappa/2-1} m^{1/2}b_{m} \to 0 \) as \(m \to \infty\), we expand \(g_{1}(\cdot)\) in the  vicinity of \(y\):
\begin{multline}
\label{multi1}
g_{1}^{\pm}(y):=
	 g_{1}\left(
		y \pm \bar{c} n^{3 \kappa/2-1} m^{1/2}b_{m}
	\right) 
	\\=
	g_{1}(y) + 2 g_{1}(y) e^{-y-y^{2} \lambda_{n}} \left(
		1 - 2 y \lambda_{n}
	\right) \bar{c} n^{3 \kappa/2-1} m^{1/2}b_{m} \left( 1 + o(1) \right)
	\\=  g_{1}(y) + c_{1} m^{-r}(1+o(1))
\end{multline}
with some positive \(c_{1}\) and \(r\). Next, applying the Taylor theorem to \(g_{1}(\cdot)\) near zero, we get 
\begin{multline}
\label{multi2}
g_{1}(y) = \exp\left\{
	-2e^{-y} \left(
		1 - y^{2} \lambda_{n}(1+ o(1))
	\right)
\right\}
\\
=
e^{-2e^{-y}} \left(
	1 + 2\lambda_{n} e^{-y} y^{2} (1 + o(1) ) 
\right) = e^{-2e^{-y}}  + \frac{c_{2}}{\ln m} (1+ o(1)).
\end{multline}
Combining \eqref{multi1} with \eqref{multi2}, we get that  the difference between \(g_{1}^{\pm}(y)\) and \(e^{-2e^{-y}}\) is of the order \(c_{2} (\ln m)^{-1} (1+ o(1))\). As for the function \(g_{2}(\cdot)\), we note that 
\begin{multline*}
g_{2}^{\pm}(y):=
	 g_{2}\left(
		y \pm \bar{c} n^{3 \kappa/2-1} m^{1/2}b_{m}
	\right)
	\\=
	g_{2} (y) \pm c m g_{2} (y) \cdot  p
	\left( 
	u_{m}^{\pm}(y) \frac{ b-a}{J} 	
	\right)
	\frac{  n^{3 \kappa/2-1} m^{1/2}} {2 h_{2}} (1 + o(1)),
	\end{multline*}
where by \(p(\cdot)\) we denote the density of the standard normal distribution, and 
\[u_{m}^{\pm}(y)  = 
	\frac{
			y \pm \bar{c} n^{3 \kappa/2-1} m^{1/2} b_{m}
		} {
			2 h_{2} b_{m}
		} 
		+
		 b_{m}.
\]
It is a worth mentioning that 
\[
	 p
	\left( 
		u_{m}^{\pm}(y) \frac{ b-a}{J} 	
	\right) = 
	\frac{1}{\sqrt{2 \pi}} e^{ - (u_{m}^{\pm}(y))^{2}(b-a) / (2J) }
	=\frac{c}{m} (1 + o(1)),
\]
because in the trigonometric case \(b-a=2 J h_{2}\). Therefore, 
\[
 g_{2}\left(
		y \pm \bar{c} n^{3 \kappa/2-1} m^{1/2}b_{m}
	\right)
	=
	g_{2} (y) + \frac{c}{m^{s}} (1 + o(1))
\]
with some positive \(c\) and \(s\). Finally, applying \eqref{aphi},  we arrive at 
\[
g_{2}(y) = \exp\left\{-m \frac{1}{\sqrt{2 \pi} u_{m}}e^{-u_{m}^{2} (b-a)/(2J)} \right\}
=
\left(
	1+ \frac{1}{\sqrt{\ln m}}
\right) (1 + o(1)),
\]
because \(b-a=2 J h_{2}\). Therefore, 
\[
A_{m}^{\pm}(y)  = e^{-2 e^{-y}}+\frac{c}{\sqrt{\ln m}} (1+o(1)), \qquad m \to \infty,
\]
 and the desired result \eqref{unverbesser} follows.
\section{Legendre polynomials}
\label{Leg}
In this section, we consider the orthogonal Legendre polynomials $P_{n}(x),-1\leq x\leq 1,$
defined by the formula%
\begin{equation}
P_{n}(x)=\frac{1}{n! 2^{n}}\left[ \left( x^{2}-1\right) ^{n}\right] ^{(n)}, \qquad n=0, 1, 2, ... .
\label{leg1}
\end{equation}
The orthonormal Legendre polynomials \ $\widehat{P}_{n}(x)$ are defined by 
\begin{equation}
\widehat{P}_{n}(x)=\sqrt{\frac{2n+1}{2}}P_{n}(x), \qquad -1\leq x\leq 1.  \label{2}
\end{equation}
Recall that we are interested in the asympotic behaviour of the probability of the event
\(
 \left\{ 
	\bz\geq u \; c\; \sqrt{\delta / 2}
 \right\}
\)
as \(u \to +\infty\), where
\begin{equation}
\bz = \bz^{J, m} 
\triangleq \sup_{x\in \lbrack -1,1]}\bp^{J, m} (x),  
\label{6}
\end{equation}%
and the Gaussian field $\bp^{J,m}(x)$ is defined by 
\begin{equation}
\bp^{J, m}(x)=\bp(x) \triangleq c \; \sum_{j=0}^{J}\sqrt{\frac{2j+1}{2}}P_{j}(x)Z_{j} = 
c\; \sum_{j=0}^{J}\widehat{P}_{j}(x)Z_{j},  \quad x \in [-1,1],
\label{7}
\end{equation}%
$Z_{j}$ are i.i.d. standard normal random variables, and \(c\) is a constant that will be defined later. Note that here we slightly change the notation introduced in Section~\ref{mainres}.

This section is devoted to the proof of  Theorem~\ref{main} (ii). The main idea of the proof is to use Corollary~8.3 from  \cite{Piterbarg}. Below we check the conditions of this theorem, listed on pp.118-119 and 133 from \cite{Piterbarg}. 

\subsection{Properties of  Legendre polynomials}
Below we list some important properties of the Legendre polynomials, as well as some essential ideas of the proofs of these properties.
\begin{enumerate}
\item
Many properties of  Legendre polynomials may be derived from the generating
function representations 
\[
F(x,w)\triangleq \frac{1}{\sqrt{1-2xw+w^{2}}}=\sum_{n=0}^{\infty
}P_{n}(x)w^{n}, 
\]%
\[
F_{x}^{\prime }(x,w)=w\left( 1-2xw+w^{2}\right)
^{-3/2}=\sum_{n=0}^{\infty }P_{n}^{\prime }(x)w^{n},
\]%
see \cite{Suetin}, formula (11). In particular, 
\begin{eqnarray}
P_{n}(1)=1,  &\quad& P_{n}(-1)=(-1)^{n}, \nonumber\\
P_{n}^{\prime }(1)=\frac{n(n+1)}{2}, &\quad& P_{n}^{\prime }(-1)=\left( -1\right) ^{n+1}\frac{n(n+1)}{2}.  \label{3}
\end{eqnarray}%
\item
From the Laplace formula (\cite{Suetin}, p. 128, formula (1))%
\[
P_{n}(x)=\frac{1}{\pi }\int_{0}^{\pi }\left( x+i\sqrt{1-x^{2}}\cos \theta
\right) ^{n}d\theta ,\text{ }\left\vert x\right\vert \leq 1, 
\]%
we derive that%
\begin{equation}
\left\vert P_{n}(x)\right\vert \leq \frac{1}{\pi }\int_{0}^{\pi
}[x^{2}+(1-x^{2})\cos ^{2}\theta ]^{n/2}d\theta \leq 1,\text{ }\left\vert
x\right\vert \leq 1,  \label{4}
\end{equation}%
and 
\begin{equation}
\left\vert P_{n}(x)\right\vert <1\text{ for }\left\vert x\right\vert <1.
\label{pn5}
\end{equation}
\item Another important property is that \(\max_{\lbrack -1,1]}\left\vert P_{j}^{\prime }(x)\right\vert =
P_{j}^{\prime }(1)\). In fact, by 8.915 (2) from \cite{gr},
\begin{equation*}
P_{j}^{\prime }(x)=\sum_{\{k:\text{ }j-2k-1\geq 0\}}(2j-4k-1)P_{j-2k-1}(x).
\end{equation*}%
Clearly, 
\begin{equation}
\left\vert P_{j}^{\prime }(x)\right\vert \leq \sum_{\{k:\text{ }j-2k-1\geq
0\}}(2j-4k-1).  \label{2a}
\end{equation}
Suppose that $j=2l.$ Then we obtain from \eqref{2a}
\begin{eqnarray}
\nonumber
\left\vert P_{2l}^{\prime }(x)\right\vert &\leq&
(4l-1)+(4l-5)+...+(4l-4(l-1)-1)=4l^{2}-\sum_{k=0}^{l-1}(4k+1)\\
\label{3a}
&=&
4l^{2}-2l(l-1)-l=2l^{2}+l=\frac{2l(2l+1)}{2}=P_{2l}^{\prime }(1).  
\end{eqnarray}
If $j=2l-1$ we obtain again from (\ref{2a})%
\begin{eqnarray}
\nonumber
\left\vert P_{2l-1}^{\prime }(x)\right\vert &\leq&
(4l-3)+(4l-7)+...+(4l-4(l-1)-3)\\
\nonumber
&=&
4l^{2}-\sum_{k=0}^{l-1}(4k+3)=4l^{2}-2l(l-1)-3l=\\
\label{4a}
&=& 2l^{2}-l=\frac{(2l-1)2l}{2}%
=P_{2l-1}^{\prime }(1).  
\end{eqnarray}
Therefore, from  (\ref{3a}) and (\ref{4a}) it follows that
\[
\max_{\lbrack -1,1]}\left\vert P_{j}^{\prime }(x)\right\vert =
P_{j}^{\prime }(1)=\frac{j(j+1)}{2}.
\]

\end{enumerate}
\subsection{Covariance function of the  process $\p(x)$}
 The covariance function $%
r(x,y)$ of the process \(\p(x)\) is equal to%
\[
r(x,y)=c^{2} \; \sum_{j=0}^{J} \frac{2j+1}{2} P_{j}(x)P_{j}(y),
\]%
while the correlation function $\rho (x,y)$ is equal to 
\begin{multline}
\rho (x,y)=\frac{\sum_{j=0}^{J}\left( 2j+1\right) P_{j}(x)P_{j}(y)}{\left(
\sum_{j=0}^{J}\left( 2j+1\right) P_{j}^{2}(x)
\right) ^{1/2} \left(\sum_{j=0}^{J}\left(
2j+1\right) P_{j}^{2}(y)\right) ^{1/2}},\\ \rho (x,x)=1, \quad x\in \lbrack -1,1].
\label{8}
\end{multline}
Denote by $\sigma^{2}(x)$ the variance of the process $\p (x)$, that is,
\begin{eqnarray}
\label{sigmaleg}
\sigma ^{2}(x)=r(x,x)=c^{2} \; \sum_{j=0}^{J}\frac{2j+1}{2}P_{j}^{2}(x).
\end{eqnarray}
Let us check the condition (E1) on p. 118 from  \cite{Piterbarg}, which states that the variance of the process \(\p(x)\) can be represented as 
\begin{eqnarray}
	\label{e1} 
	\sigma(x) = 1 - \left|
		A_{j}(x -x_{j})
	\right|^{\beta_{j}}
	\left(
		1 + o(1)
	\right), \quad j=1,..,q,
\end{eqnarray}
where   \(x_{1}, ... ,x_{q}\) are the points of maximum of \(\sigma^{2}(x)\), \(A_{j} \ne 0\),   \(\beta_{j} \in \R\), \(j=1..q\). From (\ref{4}) and (\ref{pn5}), it follows that  \(q=2\), $x_{1}=1$ and $x_{2}=-1$, and 
\begin{eqnarray}
\label{scor}
	\sigma ^{2}(1)=\sigma
^{2}(-1)\triangleq \sigma _{\max }^{2}=c^{2} \; \sum_{j=0}^{J}\frac{2j+1}{2}=	\frac{c^{2}}{2}(J+1)^{2}.
\end{eqnarray}
Since from \eqref{e1} it follows that \(\sigma_{\max}=1\), we choose \(c=\sqrt{2} / (J+1)\). Next, we calculate the derivative of the variance, 
\begin{equation*}
\sigma ^{\prime }(x)=\frac{c}{\sqrt{2\sum_{j=0}^{J}\left( 2j+1\right)
P_{j}^{2}(x)}}\sum_{j=0}^{J}\left( 2j+1\right) P_{j}(x)P_{j}^{\prime }(x).
\end{equation*}%
In the neighbourhood of $x=1$ we have an expansion%
\begin{equation}
\sigma (x)=\sigma (1)-\sigma ^{\prime }(1)(1-x)+o((1-x)^{2}),  \label{10}
\end{equation}%
where 
\begin{equation*}
\sigma ^{\prime }(1)=\frac{c}{4\sqrt{2}}J(J+1)(J+2) =
J(J+2) / 4.  \label{11}
\end{equation*}%
Analogously, in the neighbourhood of $x=-1$ we have an expansion%
\begin{equation}
\sigma (x)=\sigma (-1)+\sigma ^{\prime }(-1)(x+1)+o((x+1)^{2}),  \label{12}
\end{equation}%
where%
\begin{equation*}
\sigma ^{\prime }(-1)=-J(J+2) / 4.
\end{equation*}%
From (\ref{10}) and (\ref{12}), we finally conclude that \eqref{e1} holds with 
$\beta_{1} =\beta_{2}=1$ and $A_{1}=A_{2}=J(J+2) / 4$.
\subsection{Local homogeneity}
The second condition (E2) on p.112  from \cite{Piterbarg} is about the local homogeneity of the process. It states that for any \(j=1..q\),
\begin{eqnarray}
\label{e2}
	\rho(x,y) = 1 - C_{j} |x-y|^{\alpha_{j}} 
	\left(
		1+ o(1) 
	\right), \qquad \mbox{as} \quad x \to x_{j}, \; y \to x_{j}, 
\end{eqnarray}
where \(C_{j} \ne 0\), \(\alpha_{j} \in \R\). Let us first check \eqref{e2} for the point \(x_{1}=1\). Denote 
\begin{equation*}
S_{lk}(x) = S_{lk}^{J}(x) :=\sum_{j=0}^{J}\left( 2j+1\right)
P_{j}^{(l)}(x)P_{j}^{(k)}(x), \qquad k=0,1,2, \;l=0,1. 
\end{equation*}
Then we get from (\ref{8})  with \(\Delta :=y-x\), 
\begin{eqnarray*}
\rho (x,y)&=&\rho (x,x+\Delta )\\
&=&\frac{\sum_{j=0}^{J}\left( 2j+1\right)
P_{j}(x)P_{j}(x+\Delta )}{\left[
\left( \sum_{j=0}^{J}\left( 2j+1\right)
P_{j}^{2}(x)
\right)
\left(
\sum_{j=0}^{J}\left( 2j+1\right) P_{j}^{2}(x+\Delta )\right) \right]
^{1/2}}\\
&=&
\frac{S_{00}(x)+S_{01}(x)\Delta +\frac{1}{2}S_{02}(x)\Delta
^{2}+o(\Delta ^{3})}{\left( \bar{S}(x)\right) ^{1/2}},
\end{eqnarray*}
where we use the notation
\begin{eqnarray*}
\bar{S}(x)=
\left[ S_{00}(x)\right]
^{2}+2S_{00}(x)S_{01}(x)\Delta +S_{00}(x)\left[
S_{11}(x)+S_{02}(x)\right] \Delta ^{2}+o(\Delta ^{3}).
\end{eqnarray*}
Continuing the line of reasoning, 
\begin{eqnarray*}
\rho (x,y)&=&
\frac{1+\frac{S_{01}(x)}{S_{00}(x)}\Delta +\frac{1}{2}\frac{%
S_{02}(x)}{S_{00}(x)}\Delta ^{2}+o(\Delta ^{3})}{\left( 1+2\frac{%
S_{01}(x)}{S_{00}(x)}\Delta +\frac{\left[ S_{11}(x)+S_{02}(x)%
\right] }{S_{00}(x)}\Delta ^{2}+o(\Delta ^{3})\right) ^{1/2}}
\\
&=&
\frac{1+\frac{S_{01}(x)}{S_{00}(x)}\Delta +\frac{1}{2}\frac{%
S_{02}(x)}{S_{00}(x)}\Delta ^{2}+o(\Delta ^{3})}{1+\frac{%
S_{01}(x)}{S_{00}(x)}\Delta +\frac{1}{2}\left\{ \frac{\left[
S_{11}(x)+S_{02}(x)\right] }{S_{00}(x)}-\left( \frac{%
S_{01}(x)}{S_{00}(x)}\right) ^{2}\right\} \Delta ^{2}+o(\Delta ^{3})}\\
&=&
\frac{1+\frac{S_{01}(x)}{S_{00}(x)}\Delta +\frac{1}{2}\frac{%
S_{02}(x)}{S_{00}(x)}\Delta ^{2}}{1+\frac{S_{01}(x)}{%
S_{00}(x)}\Delta +\frac{1}{2}\left\{ \frac{\left[
S_{11}(x)+S_{02}(x)\right] }{S_{00}(x)}-\left( \frac{%
S_{01}(x)}{S_{00}(x)}\right) ^{2}\right\} \Delta ^{2}}(1+o(\Delta
^{3}))\\
&=&
1-\frac{1}{2}\left( \frac{S_{11}(x)}{S_{00}(x)}-\left( \frac{%
S_{01}(x)}{S_{00}(x)}\right) ^{2}\right) (y-x)^{2}+o((y-x)^{2}).
\end{eqnarray*}
We have from (\ref{3})
\begin{eqnarray*}
S_{00}(1)&=& \sum_{j=0}^{J}\left( 2j+1\right) P_{j}^{2}(1)=\left(
J+1\right) ^{2},\\
S_{01}(1)&=&\sum_{j=0}^{J}\left( 2j+1\right) P_{j}(1)P_{j}^{\prime
}(1)\\
&=& \sum_{j=1}^{J}j^{3}+\frac{3}{2}\sum_{j=1}^{J}j^{2}+\frac{1}{2}%
\sum_{j=1}^{J}j=\frac{J(J+1)^{2}(J+2)}{4},\\
S_{11}(1) &=& \sum_{j=0}^{J}\left( 2j+1\right) \left( P_{j}^{\prime
}(1)\right) ^{2}=\frac{1}{4}\sum_{j=0}^{J}\left( 2j+1\right) j^{2}(j+1)^{2}\\ 
&=&
\frac{1}{2}\sum_{j=1}^{J}j^{5}+\frac{5}{4}\sum_{j=1}^{J}j^{4}+%
\sum_{j=1}^{J}j^{3}+\frac{1}{4}\sum_{j=1}^{J}j^{2}=\frac{%
J^{2}(J+1)^{2}(J+2)^{2}}{12},
\end{eqnarray*}
Therefore, we get 
\begin{eqnarray*}
\frac{1}{2}\frac{S_{11}(1)}{S_{00}(1)}&=&\frac{J^{2}(J+2)^{2}}{24},\\
\frac{1}{2}\left( \frac{S_{01}(t)}{S_{00}(t)}\right) ^{2}&=&\frac{%
J^{2}(J+2)^{2}}{32},\\
\frac{1}{2}\left( \frac{S_{11}(1)}{S_{00}(1)}-\left( \frac{%
S_{01}(1)}{S_{00}(1)}\right) ^{2}\right)& =&\frac{J^{2}(J+2)^{2}}{24}-%
\frac{J^{2}(J+2)^{2}}{32}=\frac{J^{2}(J+2)^{2}}{96}.  
\end{eqnarray*}
Thus the condition \eqref{e2}  holds near the point $x_{1}=1$ with $\alpha_{1}=2$ and $C_{1}=J^{2}(J+2)^{2} / 96.$ It follows
from (\ref{3}) that $S_{00}(-1)=S_{00}(1),$ $%
S_{01}(-1)=-S_{01}(1),$ $S_{11}(-1)=S_{11}(1)$ and, hence,
the same expansion holds near point $x=-1,$ with  $\alpha_{2} =\alpha_{1}$ and $C_{2}=C_{1}.$ 
\subsection{Global H{\"o}lder condition}
\label{ho}
We now check the condition (E3) on p. 118 from  \cite{Piterbarg},  which states there exist some \(g>0\),  \(G>0\), such that for all \(x,y\), 
\begin{eqnarray*}
	\E\left( 
		\bp^{J,m}(x) - \bp^{J,m}(y) 
	\right)^{2}
	\leq 
	G \left|
		x-y
	\right|^{g}.
\end{eqnarray*}
This condition immediately follows from 
\begin{eqnarray*}
	d^{2}(x,y)&=&c^{2} \cdot \E \left( \sum_{j=0}^{J}\sqrt{\frac{2j+1}{2}}\left(
	P_{j}(x)-P_{j}(y)\right) Z_{j}\right) ^{2}\\
		&=& c^2
\sum_{j=0}^{J}\frac{2j+1}{2}%
	\left( P_{j}(x)-P_{j}(y)\right) ^{2}\leq C_{J}(x-y)^{2}
\end{eqnarray*}
with some constant \(C_J\) depending only on \(J\), because  
\[
	\left\vert P_{j}^{\prime
	}(x)\right\vert \leq P_{j}^{\prime }(1)=j(j+1) / 2.
\]
\subsection{Concluding remarks}
Applying Corollary~8.3 from \cite{Piterbarg} (p.~134), we arrive at  
\begin{equation}
\P \left\{ 
	\sup_{x \in [-1,1]} \bp^{J, m}(x) >u
\right\} 
=
2 \left( 
	1 - \Phi\left( 
		u
	\right)
\right) 
\left(
	1 + o(1)
\right), \qquad u \to \infty.
\label{17}
\end{equation}
Intuitively (\ref{17}) means that for the large level $u$ the considered probability  is approximately equal to the sum of two probabilities\ $\P\{ \bp^{J, m}(1)\geq u\}$ and $\P\{ \bp^{J,m}(-1)\geq u\}$. Taking into account \eqref{aphi}, we get
\begin{equation*}
\P \left\{ 
	\sup_{x \in [-1,1]} \bp^{J, m}(x) >u
\right\} 
=
	\frac{\sqrt{2}}{\sqrt{\pi} u } e^{-u^2 /2 }
\left(
	1 + o(1)
\right), \qquad u \to \infty.
\end{equation*}
Finally, we conclude that 
\begin{eqnarray*}
\P \left\{ 
	\tzeta >u
\right\} 
&=&
\P \left\{ 
	\sup_{x \in [-1,1]} \bp^{J, m}(x) >u c \sqrt{\delta/2}
\right\}\\ 
&=&
	\frac{2}{(\sqrt{\pi} c) \cdot (\sqrt{\delta} u) } e^{-c^2  \delta u^2/4 }
\left(
	1 + o(1)
\right), \qquad u \to \infty.
\end{eqnarray*}
This observation completes the proof of Theorem~\ref{main} (ii).

\section{Wavelets} 
\label{wav}
In this section, we consider the case of Haar wavelets, that is, 
	\begin{eqnarray*}
		\psi_{j} (x)  =\Biggl\{  \frac{1}{\sqrt{\delta}}, \qquad \frac{1}{\sqrt{\delta}}\Biggl(I\{x \in [a+\delta/2, a+\delta ]\} - I\{x \in [a,a+\delta/2]\} \Biggr)
		\Biggr\},
	\end{eqnarray*}
where \(\delta = 2^{-l}\) for some \(l \in \N\). For this set of functions,
\begin{eqnarray*}
	\tzeta = 2^{l/2} \left(
		Z_{0} + | Z_{1} |
	\right).
\end{eqnarray*}
Therefore \(\tzeta/2^{l/2}\) has distribution with density and cdf equal to 
\[ 
	p_{Z_{0} + | Z_{1} |} (x) = \frac{1}{\sqrt{\pi}}
	e^{-x^{2}/4} \Phi \left(
		x/\sqrt{2}
	\right),
	\qquad 
	F_{Z_{0} + | Z_{1} |} (x) = 1 - \Phi^{2} \left(
		x/\sqrt{2}
	\right).
\]
Next, we apply a Taylor expansion of the function \( 1 - \Phi(x)\) for large \(x\) up to the second order \eqref{aphi}, and get that
\begin{eqnarray}
\label{urg}
	1 - \Phi^{2} (x/\sqrt{2}) = 
	\frac{2}{\sqrt{ \pi} x} e^{-x^{2}/2}
	\left(
		1 - \frac{2}{x^2} + 
			o\left( 
				\frac{1}{x^2}
			\right)
		\right).
\end{eqnarray}
This completes the proof of Theorem~\ref{main} (iii). 

\section{Discussion}
\label{disc}
The main contributions of this paper are Theorems~\ref{main3} and \ref{thm35}  , which give the asymptotic behavior of the maximal deviation distribution for projection estimates. Our research is  motivated by the paper~\cite{Fig3}, which is to the best of our knowledge the unique publication on this topic. Below we list the main findings of our research:
\begin{enumerate}
	\item first of all, we provide a unified treatment for different sets of basis functions (Legendre polynomials, trigonometric basis, wavelets);
	\item we derive the asymptotics of the maximal deviation distribution for any Legendre polynomials, not only for piecewise constant and piecewise linear basis functions, as it was known before;
	\item we have found a sequence of accompanying laws, which leads to better convergence rate (see Theorem~\ref{thm35}).
\end{enumerate} 

The main ingredient of the proof of Theorem~\ref{main3} is formulated in Theorem~\ref{main}, which reveals the essential difference in asymptotic behaviour of Gaussian processes for different sets of basis functions. An open problem is to prove similar facts for any basis under some mild conditions. The existing theory for Gaussian processes doesn't have a unified remedy for  solving such problems, and  therefore this issue can be a good topic for further research.

\appendix
\section{Some proofs}
\subsection{Proof of  Proposition~\ref{FG}}
\label{prooffg}
The proof follows the same lines as the proof of Theorem~4.1 in \cite{Fig3}.

\textbf{1.} \textit{Preliminary remarks.} For an empirical process \(G(\cdot)\) and positive constant \(\kappa\), introduce the notation
\[
	\L(\kappa, G, d; x) := \kappa \sum_{r=1}^{d} 
	\left[
		\int_{D}	\varphi_{r} (u) \; dG(u)
	\right] 
	\varphi_{r} (x).
\]
When there is no risk of confusion, we will use the simplified notation \(\L(\kappa, G)\).
Note that 
\[
	\hs_n(x)  - \E\hs_n (x) = \L\Bigl(\sqrt{n} / T, Z _{n}(F_{\Delta}(\cdot)), d; x\Bigr)=:\L_{1}(x),
\]
where \(\Z_{n}(\cdot)\)  is  the standardized empirical process of a uniform random sample 
\(F_{\Delta}( 
				X_{\Delta}^{(k)}
			), \; k=1..n\), that is 
\[	
	\Z_{n}(x) := \sqrt{n} \Biggl( 
		 \frac{1}{n} \sum_{k=1}^{n }I \left\{ 
		 	F_{\Delta}( 
				X_{\Delta}^{(k)}
			) \leq x
		\right\} - 
		\P\left\{F_{\Delta}( 
				X_{\Delta}^{(k)}
			)\leq x \right\}
	\Biggr),
\]
and \(F_{t}(\cdot)\) is the distribution function of \(X_{t}\). 
In our notations,  \(Z _{n}(F_{\Delta}(\cdot))\) is the empirical process for \(X_{\Delta}^{(k)}, \; k=1..n\), 
\begin{eqnarray*}
	Z _{n}(F_{\Delta}(x)) &= &
	\sqrt{n} \Biggl(
		\frac{1}{n} \sum_{k=1}^{n } I \left\{ X_{\Delta}^{(k)}\leq x\right\} - 	F_{\Delta}(x) 
	\Biggr).
\end{eqnarray*}

\textbf{2.} \textit{Koml{\'o}s-Major-Tusnady construction.} \label{KMT}
 Applying Theorem~3 in \cite{KMT}, we get that there exists a version of \(\Z_{n}(x)\) (denoting below also as \(Z_{n}(x)\) for simplifyng the notation) such that  for any \(y >0\)  the probability of the event 
\begin{eqnarray*}
	\W_{n}(y) := \left\{
		\sup_{x \in [0,1]} \left| 
			Z_{n}(x) - B_{n}(x) 
		\right| 
		\leq
		C _{1}\frac{\log(n)}{\sqrt{n}} + y
	\right\}
\end{eqnarray*}
is larger than  \(1- K e^{-\lambda y \sqrt{n}}\), 
where  \(B_{n}(x) = W_{n}(x) -  x W_{n}(1)\) is the corresponding Brownian bridge constructed by Brownian motion \(W_{n}(x)\) and \(C_{1},K,\lambda\) are some positive absolute constants. As usual, we take \(y=y^{*}=\log(n) / \sqrt{n}\), and get that the set \(\W^{*}_{n}=\W_{n}(y^{*})\) defined by 
\begin{eqnarray}
\label{Wstar}
\W^{*}_{n} := \left\{
		\sup_{x \in [0,1]} \left| 
			Z_{n}(x) - B_{n}(x) 
		\right| 
		\leq
		C _{2}\frac{\log(n)}{\sqrt{n}}
	\right\}
\end{eqnarray}
is of probability larger than \(1-K/n^{\lambda}\).

\textbf{3.} \(\textcolor{blue}{
\L_{1}(x) \too \L_{2}(x):=\L\Bigl(\sqrt{n} / T, B _{n}(F_{\Delta}(\cdot)), d; x\Bigr).}\)

By the definition of the functional \(\L\),  
\begin{eqnarray}
\label{L1}
	\L_{1}(x) -\L_{2}(x)  =
	\L\Bigl(\sqrt{n} / T, Z _{n}(F_{\Delta}(\cdot))- B _{n}(F_{\Delta}(\cdot))\Bigr)
\end{eqnarray}
It is a worth mentioning for any empirical process \(G\), 
\begin{eqnarray}
\label{w}
	\sup_{x \in D} \left| \L\Bigl(\kappa, G, d; x \Bigr) \right|
	\leq 
	C_{2}\; \kappa \;m \;w(G, D, \delta),
\end{eqnarray}
where \(C_{2}\) is a positive constant depending on \(\left( \varphi_{r} \right)\), and \(w\) is the modulus of continuity, that is 
\begin{eqnarray*}
 	w (G, D, \delta) := \sup\Bigl\{ 
		\left| 
			G(u) - G(v)
		\right| : \; 
		u,v \in D, \; |u-v| < \delta
	\Bigr\}.
\end{eqnarray*}
In fact, using  integration by parts we  get that for \(x \in I_{p}\), 
\begin{eqnarray*}
	\left|
		\L\Bigl(\kappa, G, d; x \Bigr)  
	\right|
	&=& 
	\left|
	\kappa \sum_{j=0}^{J} 
	\left[
		\int_{a}^{a+\delta}	\psi_{j} (u) \; dG\left(u+\bd\right)
	\right] 
	\psi_{j} (x-\bd)
	\right|
		\\
	&=&
	\left| 
	\kappa \sum_{j=0}^{J} \Bigl[
		\psi_{j} (a+\delta) \Bigl(
			G(a+\bd +\delta) - G (a+\bd ) 
		\Bigr) \Bigr. \right.
		\\&&
		\Bigl.\left.
		\hspace{1cm}
		-
		\int_{a}^{a+\delta} 
			\Bigl(
				G(u+\bd)-G(a+\bd)
			\Bigr) 
		d\psi_{j}(u)
		\Bigr]
		\psi_{j}(x-\bd)
		\right|\\
		&\leq&
		\kappa \sum_{j=0}^{J} 
		\left( 
			\sup_{x \in I_{1}}|\psi_{j} (x) |
			+ 
			V_{a}^{a+\delta} (\psi_{j})
		\right)
		\sup_{x \in I_{1}}|\psi_{j} (x) | \cdot
		w(G, D, \delta)\\
		&\leq& 
		C_{2}\; \kappa \;m \;w(G, D, \delta)
		,
\end{eqnarray*}
where \(\bar{\delta}: =\delta(p-1)\), \(C_{2}>0\), and we use a slightly simplified notation \(\psi_{j}(\cdot) := \psi_{j}^{m} (\cdot)\) and the conditions \eqref{cond1}. 
Combining \eqref{Wstar}, \eqref{L1} and \eqref{w} we get that on the set \(\W^{*}_{n}\), 
\begin{eqnarray*}
	\sup_{x \in D } \left| 
		\L_{1}(x) - \L_{2}(x) 
	\right| 
	 \leq 
	C_{3} \frac{m \log(n)}{T}, \qquad \mbox{where} 
	\;  C_{3}>0. 
\end{eqnarray*}
\textbf{4.} \(\textcolor{blue}{\L_{2}(x) \too \L_{3}(x):= 	\L\Bigl(\sqrt{n} / T, B _{n}(1-F_{\Delta}(\cdot)), d  ; x\Bigr)}\).

Taking into account that \(B_{n}(x) \eqd B_{n}(1-x)\) for any \(x \in [0,1]\), we get 
\begin{eqnarray*}
	\L_{2}(x)= \L\Bigl(\sqrt{n} / T, B _{n}(F_{\Delta}(\cdot))\Bigr) \eqd \L\Bigl(\sqrt{n} / T, B _{n}(1-F_{\Delta}(\cdot))\Bigr) = \L_{3}(x).
\end{eqnarray*}
\textbf{5.} \(\textcolor{blue}{\L_{3}(x) \too \L_{4}(x):= \L\Bigl(\sqrt{n} / T, W _{n}(1-F_{\Delta}(\cdot))\Bigr).}
\)

Obviously,
\begin{eqnarray*}
	\L_{4}(x) -\L_{3}(x)  =
	\L\Bigl(
		\sqrt{n} / T, \left(1 - F_{\Delta}(\cdot)\right) W_{n}(1)
	\Bigr)
\end{eqnarray*}
Similarly to Step 3, we get 
\begin{eqnarray*}
	\sup_{x \in D } \left| 
		\L_{4}(x) - \L_{3}(x) 
	\right|  &\leq& 
	\frac{\sqrt{n}d }{T}\; C_{2}\;
	w\Bigl(
		1-F_{\Delta}(\cdot), D, \delta
	\Bigr) 
	\;
	\left|W_{n}(1)\right|\\
	&\leq&\frac{\sqrt{n} d }{T}\;
	\Bigl(
		C_{5} \Delta^{2} + C_{6} \delta \Delta
	\Bigr) 
	\;
	\left|W_{n}(1)\right|\\
		&\leq&\frac{d }{\sqrt{n} }\;
	\Bigl(
		C_{5} \Delta + C_{6} \delta 
	\Bigr) 
	\;
	\left|W_{n}(1)\right|,
\end{eqnarray*}
where the second inequality holds due to an important sequence from \eqref{supp}: for any \(u<v, \; u,v \in D\),
\begin{eqnarray}
\nonumber
\left| 
 \P \left\{ 
	X_{\Delta} \geq u
\right\} - 
 \P \left\{ 
	X_{\Delta} \geq v
\right\}
\right| 
&<& q \Delta^{2} + \nu\left(
	[u,v]
\right) \Delta\\
\label{PP}
&\leq&
q \Delta^{2} 
+ 
\max_{x \in [u,v]}|s(x)| \cdot (v-u) \Delta.
\end{eqnarray}

\textbf{6.} \(\textcolor{blue}{\L_{4}(x) \too \L_{5}(x):=	\L\Bigl(1/ \sqrt{T}, W_{n}\left(\left(1-F_{\Delta}(x)\right)/\Delta\right),d; x \Bigr)}\).
  
 Applying properties of the Brownian motion, we get 
\begin{eqnarray*}
	\L\Bigl(\sqrt{n} / T, W _{n}(1-F_{\Delta}(\cdot))\Bigr)
	\eqd
	\L\Bigl(1/ \sqrt{T}, W_{n}\left(\left(1-F_{\Delta}(\cdot)\right)/\Delta\right)\Bigr).
\end{eqnarray*}

\textbf{7.} \(\textcolor{blue}{\L_{5}(x) \too \L_{6}(x):=\L\Bigl(1/ \sqrt{T}, W_{n}\left(\int_{\cdot}^{+\infty} s(u) du\right), d; x\Bigr)}\).  

Using the assumption \eqref{supp}, we get 
\begin{eqnarray*}
	\sup_{x \in D } \left| 
		\L_{5}(x) - \L_{6}(x) 
	\right| 
	\leq 
	\frac{dm}{\sqrt{T}} C_{7} w (W_{n}, D, q \Delta) 
	\frac{d }{\sqrt{T}}\; C_{2}\;
	w\left(
		W_{n}\Bigl(\left(1-F_{\Delta}(\cdot)\right)/\Delta\Bigr) - 
		W_{n}\Bigl(\int_{\cdot}^{+\infty} s(u) du\Bigr)
		, D, \delta
	\right). 
\end{eqnarray*}
In the paper \cite{FN}, it is proven that 
\begin{eqnarray*}
	\E \left[ 
		w (W_{n}, D, q \Delta)
	\right] \leq \breve{C} \sqrt{ \Delta \ln\left(\frac{1}{\Delta}\right)},
\end{eqnarray*}
and therefore due to Chebyshev inequality, 
\begin{eqnarray*}
\P \left\{ 
	\sup_{x \in D } \left| 
		\L_{5}(x) - \L_{6}(x) 
	\right| 
> \eps 
\right\} 
&\leq&
\P \left\{ 
	w (W_{n}, D, q \Delta)
	> \eps  \frac{\sqrt{T}}{dm C_{7}} 
\right\} \\
&\leq& 
C_{7} \breve{C}
 \frac{dm }{\eps  \sqrt{T}} 
 \sqrt{ \Delta \ln\left(\frac{1}{\Delta}\right)}.
\end{eqnarray*}

\textbf{8.} \(\textcolor{blue}{\L_{6}(x) \too \L_{7}(x):=\L\left(1/ \sqrt{T}, \int_{\cdot}^{+\infty} \sqrt{s(u)} dW_{n}(u), d; x\right)}\).  

The functionals \(\L_{6}(x)\) and \(\L_{7}(x)\) have the same distributions, because
\begin{eqnarray*}
	W_{n}\left(\int_{\cdot}^{+\infty} s(u) du \right)
	\eqd 
	\int_{\cdot}^{+\infty} \sqrt{s(u)} dW_{n}(u).
\end{eqnarray*}
Recall that 
\begin{eqnarray*}
\L\Bigl(1/ \sqrt{T}, \int_{x}^{+\infty} \sqrt{s(u)} dW_{n}(u) \Bigr) 
= 
\frac{1}{\sqrt{T}}
	\sum_{r=1}^{d} 
	\left[
		\int_{D}	\varphi_{r} (u) \sqrt{s(u)}\; dW_{n}(u)
	\right] 
	\varphi_{r} (x)
\end{eqnarray*}

\textbf{9.} \(\textcolor{blue}{\L_{7}(x) \too \L_{8}(x):=\L\left(\sqrt{(b-a)/T}, W_{n}(\cdot), d; x\right)}\).  
Let us show that 
\begin{eqnarray}
\label{L78} 
	\sup_{x \in D } \left| 
		\L_{8}(x)  - \sqrt{\frac{b-a}{s(x)}} \L_{7}(x) 
	\right| \leq
	C_{8}  T^{-1/2} \sup_{x \in D}
	 \left|
		W_{n}(x)
	\right|.
\end{eqnarray}
In fact, we can represent the difference in \eqref{L78} as 
\begin{eqnarray*}
	\L_{8}(x)  - \sqrt{\frac{b-a}{s(x)}} \L_{7}(x)  = 
	\sqrt{\frac{b-a}{T}}  
	\sum_{r=1}^{d} 
G_{r}(x) \varphi_{r} (x),
\end{eqnarray*}
where 
\begin{eqnarray*}
	G_{r}(x):=		\int_{D}	\varphi_{r} (u)  dW_{n}(u)
		-
		\int_{D}	\varphi_{r} (u) \sqrt{\frac{s(u)}{s(x)}} dW_{n}(u).
\end{eqnarray*}
Using integration by parts, we get 
\begin{multline*}
	G_{r}(x) =
	\varphi_{r}(b) \left(1- \sqrt{\frac{s(b)}{s(x)}}\right)  W_{n}(b) 
	-
	\varphi_{r}(a) \left(1- \sqrt{\frac{s(a)}{s(x)}}\right)  W_{n}(a)  \\
	+ 
	\int_{\R} \left(
		\varphi'_{r}(u) \left( 
			1-  \sqrt{\frac{s(u)}{s(x)}}
		\right)
		- \frac {
			\varphi_{r}(u) s'(u) 
		}
		{
			2 \sqrt{s(u) s(x)}
		}
	\right)
	W_{n} (u) du
\end{multline*}
Taking into account that \(s(u)\) as well as \(s'(u)\) are bounded on \(D\), we get 
\begin{eqnarray*}
	\sup_{D}|G_{r}(x)| \leq C_{9} \sup_{D} |W_{n} (u)|,
\end{eqnarray*}
and the required result follows. Note that the distribution of the random variable \(\sup|W_{n}(x)|\) is given by 
\begin{eqnarray*}
	\P \left\{
		\sup_{x \in D}|W_{n}(x)| 
		>
		u
	\right\} 
	&\leq& 
	\P \left\{
		\sup_{x \in D}W_{n}(x)
		>
		u
	\right\} 
	+
	\P \left\{
		\inf_{x \in D}W_{n}(x) 
		<
		-u
	\right\} \\
	&\leq& 
	2 \P \left\{
		\sup_{x \in D}W_{n}(x)
		>
		u
	\right\} \leq 
		2 \P \left\{
		\sup_{x \in [0,b]}W_{n}(x)
		>
		u
	\right\} \\
	 &\leq& 
	 4 \P \left\{
	 	W_{n}(b) >u
	\right\},
\end{eqnarray*}
see Theorem~2.18 (p.50) from \cite{MP}.

\textbf{10.}  \(\textcolor{blue}{\sup_{x \in D}\L_{8}(x) \too \mbox{ maximum of random variables}}\).  

Let us represent 
\begin{multline*}
	\L_{8}(x) = 
	\sqrt{\frac{b-a}{T}}
	\sum_{p=1}^{m} \sum_{j=0}^{J} 
	\Bigl[
		\int_{I_{p}}
		\psi_{j} (u-\delta(p-1)) 
		dW_{n}(u)
	\Bigr]
	\psi_{j}(x-\delta(p-1)), \; x \in I_{p}.
\end{multline*}
Note that \(Z_{j,p}:=\int_{I_{p}}
		\psi_{j} (u-\delta(p-1)) 
		dW_{n}(u)\) are the normal random variable with zero means and variances equal to 
	\[\int_{I_{p}}
		\psi^{2}_{j} (u-\delta(p-1)) 
		du
		=
		\int_{a}^{ a+\delta }
		\psi_{j}^{2}(u)
		du
	=1.\] 
Therefore,
\begin{eqnarray}
\label{L8}
	\sup_{x \in D} \left|
		\L_{8}(x) 
	\right| 
	= 	
	\sqrt{\frac{b-a}{T}}
	\max_{p=1..m}
	\left[
	\sup_{x \in I_{1}} \left|
		\sum_{j=0}^{J} Z_{j,p} \psi_{j}(x)
	\right|
	\right],
\end{eqnarray}
or, in other words, 
\(
	\sup_{x \in D} \left|
		\L_{8}(x) 
	\right| =
	\sqrt{(b-a)/T} \cdot
	\max \left\{ \zeta_{1}, ..., \zeta_{m} \right\},
\)
where \(\zeta_{1}, .. \zeta_{m}\) are independent copies of the random variable 
\begin{eqnarray*}
	\zeta = \zeta^{J,m} = \sup_{x \in I_{1}}\left|
		\sum_{j=0}^{J} Z_{j} \psi_{j}(x)
	\right|
\end{eqnarray*}
with  i.i.d. standard normal r.v.'s \(Z_{j},\; j=0..J\).

\textbf{11.} \textcolor{blue}{Last step.}

To complete the proof, we need the following technical lemma.
\begin{lem}
\label{final}
	Let \(\eta_{1}, ..., \eta_{k}\) be random variables such that
	\begin{eqnarray*}
		\P \Bigl\{ 
			\left| 
				\eta_{i+1} - \eta_{i}
			\right| 
			\leq \delta_{i}
		\Bigr\} \geq 1 -  \gamma_{i}, \quad i=1..(k-1),
	\end{eqnarray*}
for some positive \(\delta_{i}, \gamma_{i}, \; i=1..k\).  Denote by \(F_{\eta_{k}}\) the distribution function of \(\eta_{k}\).

Then 
\begin{eqnarray}
	F_{\eta_{k}}\left( 
		x- \sum_{j=1}^{k-1}\delta_{j}
	\right) 
	-
	\sum_{j=1}^{k-1}\gamma_{j}
	\leq 
	F_{\eta_{1}}(x) 
	\leq 
		F_{\eta_{k}}\left( 
		x	+ \sum_{j=1}^{k-1}\delta_{j}
	\right) 
	+
	\sum_{j=1}^{k-1}\gamma_{j}.
	\label{reslem}
\end{eqnarray}
\end{lem}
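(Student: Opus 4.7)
The plan is to prove the result by induction on $k$, using the elementary fact that if $|\eta_{i+1}-\eta_i|\leq\delta_i$ with probability at least $1-\gamma_i$, then the distribution functions of $\eta_i$ and $\eta_{i+1}$ differ only by a shift of $\delta_i$ and an additive error $\gamma_i$.

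For the base step $k=2$, I would observe the inclusions
\[
\{\eta_1\leq x\}\subseteq\{\eta_2\leq x+\delta_1\}\cup\{|\eta_2-\eta_1|>\delta_1\},
\]
\[
\{\eta_2\leq x-\delta_1\}\subseteq\{\eta_1\leq x\}\cup\{|\eta_2-\eta_1|>\delta_1\},
\]
which yield, after taking probabilities and using the assumption,
\[
F_{\eta_2}(x-\delta_1)-\gamma_1\leq F_{\eta_1}(x)\leq F_{\eta_2}(x+\delta_1)+\gamma_1.
\]

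For the inductive step, assume \eqref{reslem} holds with $k-1$ in place of $k$ (for the subsequence $\eta_2,\dots,\eta_k$), which gives
\[
F_{\eta_k}\!\Bigl(y-\textstyle\sum_{j=2}^{k-1}\delta_j\Bigr)-\sum_{j=2}^{k-1}\gamma_j\leq F_{\eta_2}(y)\leq F_{\eta_k}\!\Bigl(y+\textstyle\sum_{j=2}^{k-1}\delta_j\Bigr)+\sum_{j=2}^{k-1}\gamma_j.
\]
Applying the base step to the pair $(\eta_1,\eta_2)$ and then substituting $y=x\pm\delta_1$ into the displayed inequalities above yields the claim for $k$. Since the argument is essentially a telescoping application of the base case, no obstacle is expected; the only care needed is bookkeeping of the signs of the shifts (one must use $y=x+\delta_1$ in the upper bound and $y=x-\delta_1$ in the lower bound so that the $\delta_j$'s add up coherently).

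An alternative presentation, which I might prefer for brevity, is to note that on the event $\Omega^\star:=\bigcap_{i=1}^{k-1}\{|\eta_{i+1}-\eta_i|\leq\delta_i\}$ one has $|\eta_k-\eta_1|\leq\sum_{j=1}^{k-1}\delta_j$, and $\P(\Omega^\star)\geq 1-\sum_{j=1}^{k-1}\gamma_j$ by the union bound. Then the same two-variable argument applied to $(\eta_1,\eta_k)$ with $\delta=\sum\delta_j$ and $\gamma=\sum\gamma_j$ produces \eqref{reslem} immediately. This avoids induction altogether and is the cleanest route.
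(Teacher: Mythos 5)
Your main argument is correct and is essentially the paper's own proof: the paper establishes exactly your base-case inequalities for the pair $(\eta_1,\eta_2)$ via the same event decomposition and then telescopes them through $\eta_3,\dots,\eta_k$ ("continuing in this way"), which is your induction written out informally. Your alternative route --- intersecting the events $\{|\eta_{i+1}-\eta_i|\le\delta_i\}$, using the triangle inequality to get $|\eta_k-\eta_1|\le\sum_j\delta_j$ on that intersection, bounding its complement by the union bound, and then applying the two-variable argument once --- is also valid and arguably cleaner, though it proves the same statement with the same constants, so nothing is gained or lost beyond brevity of presentation.
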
 
\begin{proof}
	First note that 
\begin{eqnarray*}
	F_{\eta_{1}}(x) &=& \P\left\{
		\eta_{1}\leq x
	\right\} 
	\\
	&=&
	\P \left\{\eta_{1} \leq x, \eta_{2}\leq x+\delta_{1}\right\}
	+
	\P \left\{\eta_{1} \leq x, \eta_{2}>x+\delta_{1}\right\}\\
	&\leq&
		\P \left\{\eta_{2}\leq x+\delta_{1}\right\}
		+
		\P \left\{ 
			\left| 
				\eta_{2} - \eta_{1}
			\right|  > \delta_{1}
		\right\} \\
	&\leq&
	F_{\eta_{2}}(x+\delta_{1}) + \gamma_{1}.
\end{eqnarray*}
Analogously,
\begin{eqnarray*}
		F_{\eta_{2}}(x) 
		&\leq&
		F_{\eta_{3}}(x+\delta_{2}) + \gamma_{2},
\end{eqnarray*}
 and therefore
\begin{eqnarray*}
		F_{\eta_{1}}(x) 
		&\leq&
		F_{\eta_{3}}(x+\delta_{1}+\delta_{2}) + \gamma_{1}+\gamma_{2}.
\end{eqnarray*}
Continuing in this way, we obtain  the right hand side of \eqref{reslem}. As for the left side, we have
\begin{eqnarray*}
	F_{\eta_{2}}(x-\delta_{1}) &=& 
	\P \left\{\eta_{2} \leq x-\delta_{1}, \eta_{1}\leq x\right\}
		+
	\P \left\{\eta_{2} \leq x-\delta_{1}, \eta_{1} > x\right\}
	\\
	&\leq&
		\P \left\{\eta_{1}\leq x\right\}
		+
		\P \left\{ 
			\left| 
				\eta_{2} - \eta_{1}
			\right|  > \delta_{1}
		\right\} \\
	&\leq&
	F_{\eta_{1}}(x) + \gamma_{1}.
\end{eqnarray*}
 Hence, 
 \begin{eqnarray*}
	F_{\eta_{2}}(x-\delta_{1})  - \gamma_{1} \leq F_{\eta_{1}}(x).
\end{eqnarray*}
Next, 
 \begin{eqnarray*}
	F_{\eta_{3}}(x-\delta_{1}-\delta_{2})  &\leq& 
	F_{\eta_{2}}(x-\delta_{1})+\gamma_{2}\\
	&\leq& 
	F_{\eta_{1}}(x) + \gamma_{1}+ \gamma_{2},
\end{eqnarray*}
and therefore
\begin{eqnarray*}
	F_{\eta_{3}}(x-\delta_{1}-\delta_{2}) - \gamma_{1}-  \gamma_{2}
	\leq F_{\eta_{1}}(x).
\end{eqnarray*}
Continuing in this way, we get the left hand side of \eqref{reslem}.
\end{proof}

Returning to the proof of Proposition~\ref{FG}, we apply Lemma~\ref{final} with 
\begin{eqnarray*}
	\eta_{k} &:=&   \sup_{x \in D} \left\{ 
		\sqrt{\frac{b-a}{s(x)}} 
		\left| \L_{k}(x) \right|
	\right\}, \quad k=1..7,\\
	\eta_{8} &:=& 	\sqrt{\frac{b-a}{T}}
		\max_{p=1..m} \zeta_{p}.
\end{eqnarray*}  
Note that for all \(k=2,..,7\), 
\begin{eqnarray*}
	\left| 
		\eta_{k} - \eta_{k-1}
	\right| &\leq&   \sup_{x \in D} \left| 
		\sqrt{\frac{b-a}{s(x)}}
		\left|
			\L_{k}(x)
		\right| 
		-
		\sqrt{\frac{b-a}{s(x)}}
		\left|
			\L_{k-1}(x)
		\right| 
	\right|
	\\
	 &\leq&
	  \sup_{x \in D} \left\{ 
		\sqrt{\frac{b-a}{s(x)}}
	\cdot    \Bigl| 
		\L_{k}(x) - \L_{k-1}(x)
	 \Bigr|
	 \right\} 
	\\
	 &\leq&   \sup_{x \in D} \left\{ 
		\sqrt{\frac{b-a}{s(x)}}
	\right\} 
	\cdot   \sup_{x \in D} \Bigl| 
		\L_{k}(x) - \L_{k-1}(x)
	 \Bigr|,
\end{eqnarray*}  
and 
\begin{eqnarray*}
	\left| 
		\eta_{8} - \eta_{7}
	\right| &=&  
	\left| 
	\sup_{x \in D} \left\{ 
		\sqrt{\frac{b-a}{s(x)}} \left| \L_{7}(x) \right|
	\right\} 
	-
		\sqrt{\frac{b-a}{T}}
		\max_{p=1..m} \zeta_{p}
	\right|
	\\
	&=&  
	\left| 
	\sup_{x \in D} \left\{ 
		\sqrt{\frac{b-a}{s(x)}}  \left| \L_{7}(x) \right|
	\right\} 
	-
	\sup_{x \in D} 
	\left|
		\L_{8}(x)
	\right|
		\right|\\
	&\leq&
	\left|
		 \sup_{x \in D} \left\{ 
			\sqrt{\frac{b-a}{s(x)}}
		\L_{7}(x)
				\right\} 
		-
		\L_{8}(x)
	\right|,
\end{eqnarray*} 
where we apply \eqref{L8} in the second equality. Using the results obtaining on the previous steps of the proof (and changing for simplicity the indexes for constants),  we get that 
\begin{eqnarray*}
		\delta_{1}=C_{1} \frac{m \log  n}{T}, &&\qquad  \gamma_{1} = K/n^{\lambda},\\ 
		\delta_{3}=
		\Bigl(
			C_{2} \frac{ T m}{n^{3/2}} + \frac{C_{3}}{\sqrt{n}}
	\Bigr)  q_{n}^{(1)}, &&\qquad  \gamma_{3} = 2 (1 - \Phi(q_{n}^{(1)})),\\
		\delta_{5}=C_{4} 
		\frac{m}{\sqrt{T}} \sqrt{ \log \left(  \frac{ n}{T} \right) }, &&\qquad  \gamma_{5} = C_{5} \sqrt{T/n} ,\\
		\delta_{7}=C_{6} \frac{1}{\sqrt{T}} q_{n}^{(2)}, &&\qquad  \gamma_{7} = 4 (1 - \Phi(q_{n}^{(2)}/ \sqrt{b})),
\end{eqnarray*}
where the sequences  \(q_{n}^{(1)}, q_{n}^{(2)}\) are tending to \(\infty\) as \(n \to \infty\) and will be chosen in the sequel, and all other \(\delta\)'s and \(\gamma\)'s are equal to \(0\). Since for all positive \(x\), 
\begin{eqnarray}
\label{Michna}
1 - \Phi(x) \leq 	 \frac{ 1}{x\sqrt{2 \pi}} e^{-x^{2}/2},
\end{eqnarray}
see, e.g., p.2 in \cite{Michna}, we choose \(q_{n}^{(1)}=\sqrt{2 \lambda \ln n}\), and \(q_{n}^{(2)}=\sqrt{2 \lambda b \ln n}\), and  get that \(\sum_{i=1}^{k-1} \gamma_{i} \lesssim n^{-\lambda}\) as  \(n \to \infty\). Next, we set \(T=n^{\kappa}\). Note that the condition \(\kappa<1\)  guarantees that \(\Delta=T/n \to 0\) as \(n \to \infty\), and moreover 
\begin{eqnarray*}
	\delta_{5} &=& C  \frac{m}{n^{\kappa/2}} \sqrt{\log n},\\	
\delta_{1}&=&\frac{m \log n}{n^{\kappa}} \cdot O(1)  
	\lesssim 
	\delta_{5} \\
	\delta_{3}&=& 
	\frac{m \sqrt{\log n}}{n^{3/2-\kappa}} \cdot O(1)  
	+
	\frac{\sqrt{\log n}}{n^{1/2}} \cdot O(1)  
	\lesssim 
	\delta_{5},\\
	\delta_{7}&=&\frac{m \sqrt{\log n}}{n^{\kappa/2}} \cdot O(1)  
	\lesssim 
	\delta_{5}.
\end{eqnarray*}
Applying Lemma~\ref{final}, we arrive at the desired result.

\subsection{Proof of Corollary~\ref{corcor}} 
\label{acorcor}
\textbf{1.} For the cases of trigonometric basis and Legendre polynomials, we use the following result, which is given in the Russian edition of the book \cite{Piterbarg}  (1988, Corollary 6.4, page 74):

\begin{prop}
\label{propcorcor} Let $X(t),$ $t\in \lbrack 0,T]^{n},$ be
Gaussian field in $R^{n}$ with continuous trajectories. Suppose that 

\begin{enumerate}[(i)]
\item \qquad for
some $m,\sigma ^{2}<\infty$
\[
\left\vert \E X(t)\right\vert \leq m, \qquad VarX(t)\leq \sigma^{2};
\]
\item \qquad the global H{\"o}lder condition holds, that is for some \(C,\gamma >0\),
\[d_{X}^{2}(t,s)=\E(X(t)-X(s))^{2}\leq C\sum_{i=1}^{n}\left\vert
t_{i}-s_{i}\right\vert ^{\gamma }, \qquad \forall t,s;\]
\item \qquad 
 the correlation function $\rho (t,s)$ is separated from $-1
$, that is \[\rho (t,s)>-1+\delta\] for some positive $\delta .$ 
\end{enumerate}
Then there
exist $\rho >0$ such that%
\begin{multline*}
\sup_{t \in [0,T]}\P \left\{| X(t) |>u \right\}
=\sup_{t \in [0,T]}\P \left\{ X(t) > u \right\}
+
\sup_{t \in [0,T]}\P \left\{-X(t)>u \right\}
\\+O\left( \exp \left( -\frac{%
(u-m)^{2}}{2\sigma ^{2}}(1+\rho )\right) \right), \qquad u \to +\infty.   \label{1}
\end{multline*}
In particular, if additionally the process $X(t),t\in [0,T]$ has zero mean, then%
\begin{multline*}
\sup_{t \in [0,T]}\P \left\{| X(t) |>u \right\}
=2 \cdot \sup_{t \in [0,T]}\P \left\{ X(t) > u \right\}
+O\left( \exp \left( -\frac{%
u^{2}}{2\sigma ^{2}}(1+\rho )\right) \right) .  
\end{multline*}%
\end{prop}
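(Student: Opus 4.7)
\textbf{Proof plan for Proposition~\ref{propcorcor}.} The plan is to read the displayed identity as a statement about $\P\{\sup_t|X(t)|>u\}$ (the notation $\sup_t\P\{\cdot\}$ in the display is evidently a typo for $\P\{\sup_t\cdot\}$, since otherwise the identity is either trivial, by $|X(t)|>u$ being the disjoint union of $X(t)>u$ and $-X(t)>u$ at each fixed $t$, or unrelated to the later application in Corollary~\ref{corcor}). Writing $A:=\{\sup_tX(t)>u\}$ and $B:=\{\sup_t(-X(t))>u\}$, the event $\{\sup_t|X(t)|>u\}$ coincides with $A\cup B$, so inclusion--exclusion gives $\P(A\cup B)=\P(A)+\P(B)-\P(A\cap B)$ and the entire matter reduces to bounding $\P(A\cap B)$ by the claimed error. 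The centered-mean addendum is then immediate by symmetry: when $\E X\equiv0$ one has $-X\eqd X$, hence $\P(A)=\P(B)$ and the factor~$2$ appears.

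On $A\cap B$ there exist $s,t\in[0,T]^n$ with $X(s)>u$ and $X(t)<-u$, whence $X(s)-X(t)>2u$. Consequently
\[
\P(A\cap B)\leq\P\Bigl\{\sup_{s,t\in[0,T]^n}Y(s,t)>2u\Bigr\},\qquad Y(s,t):=X(s)-X(t).
\]
The Gaussian field $Y$ on $[0,T]^n\times[0,T]^n$ has continuous paths inherited from $X$, its pointwise variance satisfies
\[
\Var Y(s,t)=\Var X(s)+\Var X(t)-2\cov(X(s),X(t))\leq2\sigma^2\bigl(1-\rho(s,t)\bigr),
\]
and assumption~(iii) forces $\rho(s,t)\geq-1+\delta$, so $\sigma_Y^2:=\sup_{s,t}\Var Y(s,t)\leq2\sigma^2(2-\delta)$. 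This is strictly below the naive bound $4\sigma^2$ that would otherwise hold, and it is exactly this strict gain which produces the exponential margin $1+\rho$. The mean obeys $|\E Y|\leq2m$, and condition~(ii) propagates to $d_Y^2((s,t),(s',t'))\leq2d_X^2(s,s')+2d_X^2(t,t')$, so Dudley's entropy bound on the product domain yields a finite $m_Y:=\E\sup_{s,t}(Y-\E Y)$.

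Applying the Borell--TIS concentration inequality to $\widetilde Y:=Y-\E Y$ and using $\sup Y\leq\sup\widetilde Y+2m$, one obtains for sufficiently large $u$
\[
\P\Bigl\{\sup_{s,t}Y(s,t)>2u\Bigr\}\leq\exp\Bigl(-\frac{(2u-2m-m_Y)^2}{2\sigma_Y^2}\Bigr)\leq\exp\Bigl(-\frac{u^2}{2\sigma^2}(1+\tilde\rho)+O(u)\Bigr),
\]
where $\tilde\rho:=\delta/(2-\delta)>0$ because $2/(2-\delta)=1+\tilde\rho$. Absorbing the linear-in-$u$ correction by passing to any $\rho\in(0,\tilde\rho)$ gives the desired bound $\P(A\cap B)\leq C\exp(-(u-m)^2(1+\rho)/(2\sigma^2))$, completing the argument.

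\textbf{Main obstacle.} The entire effect is driven by extracting the sharp variance bound $\sigma_Y^2\leq2\sigma^2(2-\delta)$ from assumption~(iii); without this non-degeneracy one would only get $\sigma_Y^2=4\sigma^2$, which matches the trivial Bonferroni exponent $-(u-m)^2/(2\sigma^2)$ and provides no improvement at all. The remaining ingredients---finiteness of $m_Y$ via Dudley's integral under the Hölder condition~(ii), and the loss of an arbitrarily small piece of $\tilde\rho$ to absorb the $O(u)$ corrections---are routine, but the careful bookkeeping of the constants in the exponent is where one must be scrupulous.
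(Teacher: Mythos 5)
The paper does not actually prove this proposition: it is quoted verbatim from the Russian edition of Piterbarg's book (1988, Corollary 6.4, p.~74), so there is no in-paper argument to compare against. Your self-contained proof via the Bonferroni identity $\P(A\cup B)=\P(A)+\P(B)-\P(A\cap B)$, the reduction of $\P(A\cap B)$ to an upper tail of the difference field $Y(s,t)=X(s)-X(t)$, and Borell--TIS with the improved variance constant coming from $\rho(s,t)>-1+\delta$ is the standard route to such statements and is sound in all essential respects; your reading of $\sup_t\P\{\cdot\}$ as $\P\{\sup_t\cdot\}$ is also clearly the intended one, since that is how the proposition is used in the proof of Corollary~\ref{corcor}.

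One intermediate inequality is stated too strongly: $\Var Y(s,t)=\Var X(s)+\Var X(t)-2\cov(X(s),X(t))$ is bounded by $2\sigma^2(1-\rho(s,t))$ only when the field has constant variance $\sigma^2$ (take $\Var X(s)=\sigma^2$, $\Var X(t)$ small and $\rho$ close to $1$ for a counterexample). This does not damage the argument, because the bound you actually need,
\begin{equation*}
\Var Y(s,t)\le \sigma^2(s)+\sigma^2(t)+2(1-\delta)\sigma(s)\sigma(t)\le(2-\delta)\bigl(\sigma^2(s)+\sigma^2(t)\bigr)\le 2(2-\delta)\sigma^2,
\end{equation*}
follows directly from assumption (iii) and the arithmetic--geometric mean inequality $2\sigma(s)\sigma(t)\le\sigma^2(s)+\sigma^2(t)$; you should route the estimate this way rather than through the constant-variance shortcut. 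With that repair, the bookkeeping $2/(2-\delta)=1+\tilde\rho$, the finiteness of $\E\sup\widetilde Y$ via Dudley's entropy integral under condition (ii), and the absorption of the $O(u)$ correction by shrinking $\tilde\rho$ to any $\rho\in(0,\tilde\rho)$ all go through as you describe, and the zero-mean case follows from $-X\eqd X$.
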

\textbf{2.}
The process%
\begin{equation*}
X(t)=\widetilde{\p}(t)=\frac{1}{\sqrt{J}}\left\{ Z_{0}+\sqrt{2}\sum_{j=1}^{\frac{J-1}{2}}\left[
Z_{j}\cos (jt)+\widetilde{Z}_{j}\sin (jt)\right] \right\} 
\end{equation*}%
has zero mean and unit variance, therefore the condition (i) is fulfilled. Next,  we get that 
\begin{multline*}
	\E \left( X(t) - X(s) \right)^2 \\=
		\frac{2}{J} \cdot 
	\E \left[
		\sum_{j} Z_j \left(
			\cos(js) - \cos(jt)
		\right)
		+		
		\sum_{j} \tilde{Z}_j \left(
			\sin(js) - \sin(jt)
		\right)
	\right]^2,
\end{multline*}
and therefore the global H{\"o}lder  condition (ii) holds because 
\begin{eqnarray*}
\E \left( X(t) - X(s) \right)^2 
	&=&
	\frac{8}{J}
	\left(
		\sum_{j}
		\left(
			1 - \cos( j(t-s))
		\right)
	\right)^2\\
	&=&
	\frac{32}{J}
	\left(
		\sum_{j}
		\left(
			\sin^2 \left(
				\frac{j (t-s)}{2}
			\right)
		\right)
	\right)^2\\
&\leq&  \frac{2}{J} \left( \sum_j j^2 \right)^2 (t-s)^4.
\end{eqnarray*}
Finally, the covariance and the correlation functions are equal to 
\begin{equation}
\rho (t)=r(t)=\frac{1}{J}\left\{ 1+2\sum_{j=1}^{\frac{J-1}{2}}\cos
(jt)\right\} .
\end{equation}%
Clearly,%
\begin{equation}
\rho (t)\geq \frac{1}{J}\left\{ 1+2\sum_{j=1}^{\frac{J-1}{2}}(-1)\right\} =%
\frac{2}{J}-1>-1+\frac{1}{J}
\end{equation}%
so the condition (iii) is also satisfied. Therefore, we can apply Proposition~\ref{propcorcor} in the case of trigonometric basis.

\textbf{3.} Let us check the conditions (i) - (iii) for the case of Legendre polynomials, that is, for the Gaussian process
\begin{equation*}
\bp^{J, m}(x)=\bp(x) \triangleq c \; \sum_{j=0}^{J}\sqrt{\frac{2j+1}{2}}P_{j}(x)Z_{j} \quad x \in [-1,1],
\end{equation*}
where by \(P_j\) we denote the orthogonal polynomials defined by \eqref{leg1}. The condition (i) is trivial because the process \(\bp(x)\) has zero mean and bounded variance given by \eqref{sigmaleg}. The condition (ii) was already checked in Section~\ref{ho}. To show that the third condition holds, we consider the correlation function 
\[
\rho (x,y)=\frac{\sum_{j=0}^{J}\left( 2j+1\right) P_{j}(x)P_{j}(y)}{\left(
\sum_{j=0}^{J}\left( 2j+1\right) P_{j}^{2}(x)
\right) ^{1/2} \left(\sum_{j=0}^{J}\left(
2j+1\right) P_{j}^{2}(y)\right) ^{1/2}}.
\]
Applying the Cauchy-Schwarz inequality to the sequences \newline
\(\vec{a}=\left( P_{0}(x), \sqrt{3} P_{1}(x), \sqrt{5}P_{1}(x), ...\right)\) and 
\(\vec{b} =\left( P_{0}(y), \sqrt{3} P_{1}(y), \sqrt{5}P_{1}(y), ...\right)\), we immediately get that \(\rho (x,y) = -1\) if and only if \(\vec{a} = c \vec{b}\) for some negative \(c\). Since \(P_{0}(x)  = P_{0}(y)\), we conclude that \(\rho (x,y) > -1\) for any  \( (x,y)\) from the compact set \( [0,1]^{2}\). Therefore, the condition (iii) is fulfilled with some  \(\delta\).


\textbf{4.} In the case of wavelets, we are not able to use Theorem~\ref{propcorcor}, because the trajectories of the process are discontinuous, but can arrive at desired result using some straightforward calculations.  In fact, similar to 
Section~\ref{wav}, we get that 
\begin{eqnarray*}
	\zeta = 2^{l/2} \cdot \left(
		|Z_{0}| + |Z_{1}|
	\right).
\end{eqnarray*}
Hence the density function of \(\zeta/2^{l/2}\) is equal to
\[ 
	p_{|Z_{0}| + | Z_{1} |} (x) = \frac{1}{\pi}
	e^{-x^{2}/4} \int_{-x}^{x} e^{-u^{2}/4} du,
\]
we get that as \(u \to \infty\), 
\begin{eqnarray*}
	\P \left\{
		\zeta>u
	\right\} 
	&\asymp& \frac{2}{\sqrt{\pi}}\int_{u}^{\infty} e^{-u^{2}/2} du =
	4 \left( 1 - \Phi(u/\sqrt{2})\right)\\
	&=& 	\frac{4}{\sqrt{ \pi} x} e^{-x^{2}/2}
	\left(
		1 - \frac{2}{x^2} + 
			o\left( 
				\frac{1}{x^2}
			\right)
		\right),
\end{eqnarray*}
where the last equality follows from  \eqref{aphi}. Comparing the last expression with \eqref{urg}, we arrive at desired result.

\subsection{Proof of Theorem~\ref{main2}}
\label{proofmain2}
From  \eqref{Fst1}  we get that for any \(u\), 
\begin{eqnarray*}
	\P \left\{ 
		\sqrt{\frac{T}{m}} \Z_{n} \leq u
	\right\} 
 	\leq
\left(
			\breve{F}\left(
				\sqrt{m} \; u + c_1 \Lambda_n
			\right)
		\right)^{m}
		+c_2 n^{-\lambda}.
\end{eqnarray*}		
From \eqref{zetaas2}, it follows that for \(u \to \infty\) as \(m \to \infty\), 
\begin{eqnarray*}
1-\breve{F}\left(
	\sqrt{m} u +c_1 \Lambda_n
\right)
\asymp 
2 \frac{h_{1}}{  u^{\k}} 
		\exp\left\{
			-h_{2} \;  u^{2}
		\right\}
\to 0, \qquad u \to \infty.
\end{eqnarray*}	
Therefore, substituting \(u=\tilde{u}_m=u_m - c_1 \Lambda_n m^{-1/2}\),
\begin{eqnarray*}		
	\left(	
			\breve{F}\left(
				\sqrt{m} \; \tilde{u}_m + c_1 \Lambda_n
			\right)
		\right)^{m}
=
		e^{
			m \cdot\ln(1-(1-\breve{F}(\sqrt{m} u_m))
}=
e^{W_{m}}e^{o(W_{m})}, \quad m\to \infty,
\end{eqnarray*}
where
\(
W_{m} :=
-m\cdot\P\left\{
	\zeta^{J,m} 
\geq
	\sqrt{m} u_m 	\right\}\). Applying \eqref{zetaas2} once more, we conclude
\begin{eqnarray*}
	W_{m} =
	- 2 \frac{h_{1} \; m}{  
		u_m^{\k} 
	} 
		\exp\left\{
			- h_2 u_m^2
		\right\} \left( 1 + \btau \left( u_m\right) \right).
\end{eqnarray*}
Since \(a_{m}  \to \infty, b_{m}/c_{m} \to \infty\) as \(m \to \infty\), we get 
\begin{eqnarray*}
W_{m} &=& 
		- 2 \frac{h_{1} \; m}{  b_{m}^{\k} } 
		\exp\left\{
			-h_{2} \cdot \left( 
				2 y \frac{b_{m}}{a_{m} } 
			+
			b_{m}^{2} - 2 c_{m}
						\right)
		\right\} 
		\left( 
			1 + R(m)
		\right),
	\end{eqnarray*}
	because 
	\begin{multline*}
		\left( 
			1 +
			 \frac{y}{a_m b_m}  
			 -
			 \frac{c_m}{b_m^2}
		\right)^{-k}
		\exp \left\{
			-h_2 \left(
				\frac{y^2}{a_m^2} - 2 y \frac{c_m}{a_m b_m} + \frac{c_m^2}{b_m^2}
			\right)
		\right\} \\ 
		=
		\Bigl( 
			1 - \frac{k}{2 h_2} 
			 \frac{\ln b_m}{b_m} 
		\Bigr)
		\Bigl( 
			1 - \frac{k^2}{4 h_2} \left(
			 	\frac{\ln b_m}{b_m}
			\right)^2
		\Bigr)
		\Bigl( 
			1+ o(1)
		\Bigr)\\
		=
		1 - 
		\frac{k}{4 \sqrt{h_2} } 
		\frac{\ln \ln m}{\sqrt{\ln m}} 
		\left( 
			1 + o(1)
		\right) 
		\end{multline*}
Finally, we get the following asympotics for \(W_m\):
	\begin{eqnarray*}
	W_{m}
		&=&
		 - 2 \exp\left\{
			- 2 h_{2} y 
			\frac{b_{m}}{a_{m}} 
		\right\}
		\cdot
		\frac{
			h_{1} \; m
		}{ 
			\exp\left\{
				h_{2}b_{m}^{2}
			\right\}
		}
		\cdot
		\frac{
		\exp\left\{
				2 h_{2}c_{m}
			\right\}
		}{ b_{m}^{k}
		} (1 +R(m)) \\
		&=& - 2 e^{-y} (1 + R(m)),
\end{eqnarray*}
which leads to the conclusion that for any \(y \in \R\),
\begin{multline}
\label{pp}
 	\P \left\{
		\sqrt{\frac{T}{m}}
		\sup_{x \in D} \left(	
			\frac{
				\left|
					\hs_n (x)  - \E\hs_n (x)  
				\right|
			}
			{	
				\sqrt{s(x)}
			}
		\right)
		\leq u_{m} - c_1 \frac{\Lambda_n }{\sqrt{m}}
	\right\} 
	\\ \leq
	 e^{-2 e^{-y}} \left( 
			1 + R(m)
		\right)+c_2 n^{-\lambda}.
\end{multline}
Note that we use the notation \(R(m)\) for all functions which can be represented in the form \eqref{s} with possibly different functions \(o(1)\).
Hence \(u_{m}=u_m(y)\) can be represented as 
\(
u_{m}(y)=y (2h_{2} b_{m})^{-1}-c_{m} b_{m}^{-1}+b_{m}, 
\)%
we get that 
\begin{equation*}
u_{m}(y) +  c_{1}\frac{\Lambda _{n}}{n}=
u_{m}\left(y + 2c_{1}h_{2}b_{m}\frac{%
\Lambda _{n}}{n}\right). 
\end{equation*}%
Moreover, 
\begin{eqnarray*}
2c_{1}h_{2}b_{m}\frac{\Lambda_{n}}{n}&=&
2c_{1}h_{2}  \sqrt{\frac{\Lambda_{n}^{2}}{n^{2}}\ln \left( \frac{\Lambda
_{n}n^{\varkappa /2}}{\ln n}\right) }\\
&=&2c_{1}h_{2} \sqrt{\frac{\Lambda_{n}^{2}\ln
\Lambda _{n}}{n^{2}}+\frac{\varkappa \Lambda _{n}^{2}\ln n}{2n^{2}}-\frac{%
\Lambda _{n}^{2}\ln \ln n}{n^{2}}}=o\left( \frac{\sqrt{\ln n}}{n}\right).\end{eqnarray*}
Since \eqref{pp} is fulfilled uniformly over the compact sets, we are able to apply this inequality with \(y - 2c_{1}h_{2}b_{m}\Lambda _{n}/ n\) instead of \(y\). Finally, we arrive at 
\begin{eqnarray}
\label{pp2}
 	\P \left\{
		\sqrt{\frac{T}{m}}
		\sup_{x \in D} \left(	
			\frac{
								\left|
					\hs_n (x)  - \E\hs_n (x)  
				\right|
			}
			{	
				\sqrt{s(x)}
			}
		\right)
		\leq u_{m}
	\right\} 
	\leq
	 e^{-2 e^{-y}} \left( 
			1 + R(m)
		\right).
\end{eqnarray}
Analogously to \eqref{pp} and \eqref{pp2}, we derive from \eqref{Fst2} that 
\begin{multline}
\label{pp3}
\P \left\{
		\sqrt{\frac{T}{m}}
		\sup_{x \in D} \left(	
			\frac{
								\left|
					\hs_n (x)  - \E\hs_n (x)  
				\right|
			}
			{	
				\sqrt{s(x)}
			}
		\right)
		\leq u_{m} + c_1 \frac{\Lambda_n }{\sqrt{m}}
	\right\} \\
	\geq e^{-2 e^{-y}} \left( 
			1 + R(m)
		\right)-c_2 n^{-\lambda},
\end{multline}
and
\begin{eqnarray}
		\label{pp4}
		\P \left\{
		\sqrt{\frac{T}{m}}
		\sup_{x \in D} \left(	
			\frac{
								\left|
					\hs_n (x)  - \E\hs_n (x)  
				\right|
			}
			{	
				\sqrt{s(x)}
			}
		\right)
		\leq u_{m} 
	\right\} 
	&\geq& e^{-2 e^{-y}} \left( 
			1 + R(m)
		\right).
\end{eqnarray}
Joint consideration of \eqref{pp2} and \eqref{pp4}  completes the proof.
\subsection{Proof of Theorem~\ref{main3}}
\label{A3}
Consider the difference 
\begin{eqnarray*}
\E \hs_n (x)  - s(x)  &= &\sum_{r=1}^d \left[
	\frac{1}{\Delta} 
	\E \left( 
		\varphi_r (X_\Delta)
	\right)
	-
	\int_a^b \varphi_r (u) s(u) du 
	\right]
	\varphi_r (x) \\
	&= &\sum_{j=0}^J \sum_{p=1}^m \left[
	\frac{1}{\Delta} 
	\E \left( 
		\varphi_{j,p} (X_\Delta)
	\right)
	-
	\int_{I_p} \varphi_{j,p} (u) s(u) du 
	\right]
	\varphi_{j,p} (x),
\end{eqnarray*}
where \(\varphi_{j,p}(x) =  \psi_{j}^m  \left(  x -  \delta (p-1)  \right) I\left\{ x \in I_{p} \right\}, 
  j=0..J,\; p=1..m\).
Due to Lemma~B.1 from \cite{Fig3},
\begin{multline*}
\left|
	\frac{1}{\Delta} 
	\E \left( 
		\varphi_{j,p} (X_\Delta)
	\right)
	-
	\int_{I_p} \varphi_{j,p} (u) s(u) du 
\right| \\ \leq 
\left( 
	\left|
		\psi_j^m (a)
	\right|
	+
	\int_a^{a+\delta}
		\left| 
			(\psi_j^m)' (u) 
		\right|
	du
\right) M_\Delta (I_p),
\end{multline*}
where 
\begin{eqnarray*}
	M_\Delta (A) = \sup_{y \in A} \left| 
		\frac{1}{\Delta} \P \left\{ 
			X_{\Delta} > y 
		\right\} 
		-
		\nu\left(
			[y, +\infty)
		\right)
	\right|, \qquad A \subset \R.
\end{eqnarray*}
Applying the small-time asymptotic result (2.2) from \cite{Fig3}, which we also use in this paper (see \eqref{supp}), we conclude that \(M_\Delta (I_p) \leq M_\Delta ([a,b]) \leq q \Delta\), and therefore 
\begin{multline*}
	\sup_{x \in [a,b]}
	\left|
		\E \hs_n (x)  - s(x) 
	\right| \\
	\leq
	q n^{\kappa-1} \cdot
	\sum_{j=0}^{J} 
	\left( 
		| \psi_{j}^m (a) |
		+
		\int_{a}^{a+\delta} \left| 
			(\psi_{j}^m)' (u)
		\right|
		du 
	\right)
	\sup_{x \in [a,a+\delta]}
	\left|
		\psi_{j}^m (x) 
	\right|.
\end{multline*}
Next, using the conditions \eqref{cond1}, we arrive at 
\begin{eqnarray}
\label{bias}
\sup_{x \in [a,b]}
	\left|
		\E \hs_n (x)  - s(x) 
	\right|
	\leq \cc n^{\kappa-1} m.
\end{eqnarray}
Since 
\begin{multline}
\label{f64}
		\sqrt{\frac{T}{m}}
		\sup_{x \in D} \left(	
			\frac{
				\left|
					\hs_n (x)  - s (x)  
				\right|
			}
			{	
				\sqrt{s(x)}
			}
		\right)
		 \\ 
		 \leq 
			\sqrt{\frac{T}{m}}
		\sup_{x \in D} \left(	
			\frac{
				\left|
					 \hs_n (x)  - \E \hs_n (x)  
				\right|
			}
			{	
				\sqrt{s(x)}
			}
		\right)
		+	
					\sqrt{\frac{T}{m}}
		\sup_{x \in D} \left(	
			\frac{
				\left|
					\E \hs_n (x)  - s (x)  
				\right|
			}
			{	
				\sqrt{s(x)}
			}
		\right)\\
		\leq u_m+  \cc \cdot  n^{(3\kappa/2)-1} m^{1/2},
\end{multline}
we arrive at the desired result.

\bibliographystyle{imsart-nameyear}
\bibliography{Panov_bibliography}

\end{document}